\documentclass[11pt,psamsfonts]{amsart} 
\usepackage{amsmath}
\usepackage{amsthm}
\usepackage{amssymb}
\usepackage{amscd}
\usepackage{amsfonts}
\usepackage{amsbsy}
\usepackage{mathrsfs} 
\usepackage{setspace}
\usepackage{graphicx}
\usepackage{color}
\usepackage{calc}
\usepackage{subfigure}
\usepackage{caption}
\usepackage[font=small,labelfont=bf]{caption}
\usepackage{epsfig,afterpage}
\usepackage[dvips]{psfrag}
\usepackage[numbers,sort&compress]{natbib}

\advance\textwidth by +1.5in \advance\textheight by +1.4in
\advance\topmargin by -0.6in \advance\oddsidemargin by -0.8in
\advance\evensidemargin by -0.8in

\newcommand{\N}{\ensuremath{\mathbb{N}}}

\newcommand{\II}{\ensuremath{\mathbb{I}}}

\newtheorem {theorem} {Theorem}[section]
\newtheorem {proposition} [theorem]{Proposition}

\newtheorem {lemma} [theorem]{Lemma}
\newtheorem {example} [theorem]{Example}

\newtheorem {definition} [theorem]{Definition}
\parskip 0.2cm

\begin{document}

\title[Limit cycles in refracted Hamiltonian systems with a straight switching line]
{Limit cycles in refracted Hamiltonian systems with a straight switching line}

\author[A. Bakhshalizadeh and A. C. Rezende]
{Ali Bakhshalizadeh$^1$ and Alex C. Rezende$^2$}

\address{$^{1, 2}$ Departamento de Matem\'atica, Universidade Federal de S\~{a}o Carlos, S\~{a}o Carlos, Brazil.} 
\email{alibb@ufscar.br}
\email{alexcr@ufscar.br}

\keywords{Limit cycle, Chebyshev property, First Melnikov function, Refracted Hamiltonian system.}

\subjclass[2010]{34C29, 34C25, 34C05.}

\maketitle

\begin{abstract}
This paper presents a criterion that provides an easy sufficient condition for a collection of line integrals to have the Chebyshev property. 
The condition is based on the functions appearing in the line integrals. The criterion is used to study the number of limit cycles in 
refracted differential systems, which are formed by two Hamiltonian differential systems separated by a straight line. The paper concludes by
presenting new results on such systems, which show the effectiveness of the criterion presented.
\end{abstract}

\section{Introduction and main results}\label{sec1}
Piecewise differential systems are mathematical models that are used to represent systems with non-smooth or discontinuous behavior. 
These systems are particularly useful in fields such as physics, engineering, epidemiology, and economics where discontinuity is a common 
phenomenon \cite{BBCK,CLBB,Kunze,ML,TOBSA,TLXC}. They refer to systems of differential equations that are defined in different zones of
the state space and are separated by boundaries. These boundaries, also known as switchings or jump conditions, can cause 
non-smoothness or discontinuity in the system, making it challenging to analyze and control. The solution on the switching boundries is 
defined by A. F. Filippov \cite{Fil}. It is a mathematical framework for describing the behavior of systems that exhibit discontinuities 
or switches among different zones.

The study of limit cycles in piecewise Hamiltonian differential systems has gained a lot of attention in the recent years. 
Indeed the study of limit cycles in such systems is an extension of the famous Hilbert's 16th problem which asks about the number of limit 
cycles in the planar polynomial differential systems of degree $n+1$. 
There are many papers dealing with the number of limit cycles of the piecewise
polynomial Hamiltonian systems with a straight switching line (see for instance \cite{YZ,LL,CLYZ}). The Melnikov theory and averaging
theory are widely useful methods for investigating the number of limit cycles in piecewise smooth systems, and they are frequently applied 
in many articles. The techniques mentioned were first established in \cite{LH} and \cite{LMN}, respectively.

The Chebyshev property of the family of functions plays a crucial role in the study of limit cycles in differential 
systems, 
as it enables us to determine the number of zeros of the first order Melnikov function. 
Specifically, by examining the number of real zeros in any nontrivial linear combination of these functions, which 
constitutes the first order Melnikov
function, we can obtain an exact upper bound on the number of limit cycles in the system. Therefore, the 
Chebyshev property provides a valuable 
tool for studying the number of limit cycles in differential systems, see \cite{Pe,KS,Ma,GI,GWLZ,GMV,MV}. 
Here our primary focus is 
on investigating the number of limit cycles in refracted Hamiltonian differential systems consisting of two zones 
separated by a straight line 
at $x=0$. We introduce a criterion that presents an easy sufficient condition for a family of line integrals to exhibit the 
Chebyshev property. It is important to note that this criterion is not universally applicable; however, when it is applicable, it can 
significantly simplify the solution process.

In this paper we consider a classical Hamiltonian function given by
\begin{equation}\label{Hm}
H(x,y):=\chi(x)y^{2}+\Psi(x):=\begin{cases}
H^{+}(x,y)=\chi_{1}(x)y^{2}+\Psi_{1}(x), & x> 0,\\
H^{-}(x,y)=\chi_{2}(x)y^{2}+\Psi_{2}(x), & x< 0, 
\end{cases}
\end{equation}
where $ H^{\pm}(x,y) $ are analytic function in some open subset of the plane, and $ \Psi_{1} (0)=\Psi_{2} (0)=0$. 
We also assume that the following inequalities hold:
\begin{equation*}\label{H1}
\begin{array}{c}
x\Psi_{1}^{\prime}(x), \chi_{1}(x)>0,\qquad \text{for all}\qquad x\in(0,x_{r}), \\
x\Psi_{2}^{\prime}(x), \chi_{2}(x)>0,\qquad \text{for all}\qquad x\in(x_{l},0), \tag{H1}
\end{array}
\end{equation*}
and $ \lim_{x\rightarrow 0^+}\chi_{1}(x)=\lim_{x\rightarrow 0^-}\chi_{2}(x)>0 $.
With these assumptions it is easy to see that $(0, 0)$ is a local minimum,
and there exists a punctured neighborhood of the origin that is foliated by periodic orbits.
Using assumption \eqref{H1}, we can deduce the existence of two analytic functions $\sigma_1$ and $\sigma_2$ 
satisfying
\begin{equation*}
\begin{aligned}
\Psi_{1}(x)=\Psi_{2}(\sigma_{1}(x)),\qquad\text{for all}\qquad x\in(0, x_{r}),\\
\Psi_{2}(x)=\Psi_{1}(\sigma_{2}(x)),\qquad\text{for all}\qquad x\in(x_{l}, 0),
\end{aligned}
\end{equation*}
where $\sigma_1$ and $\sigma_2$ are non-identity analytic functions satisfying $\sigma_1 \circ \sigma_2 = 
\text{Id}$ on $(x_l,0)$ and $\sigma_2 \circ \sigma_1 = \text{Id}$ on $(0,x_r)$. Now, we define an involution 
$\sigma(x)$ on $(x_l,x_r)$ as
\begin{equation*}
\sigma(x) = 
\begin{cases}
\sigma_2(x), & x\in(x_l,0),\\ 
0, & x=0,\\
\sigma_1(x), & x\in(0,x_r),
\end{cases} 
\end{equation*}
which is clear that under the above assumption $\sigma(x)$ is an involution on $(x_l,x_r)$. Note that a mapping
$\sigma:\II\rightarrow \II$ is an {\it involution} if $\sigma \circ \sigma = \text{Id}$ and $\sigma \neq \text{Id}$. 
Let $\kappa$ be a function defined on the interval $\II\setminus \lbrace 0\rbrace$ as
\begin{equation*} 
\kappa(x) = 
\begin{cases}
\kappa_{1}(x), & x\in \II^{+}, \\ 
\kappa_{2}(x), & x\in \II^{-},
\end{cases} 
\end{equation*} 
where $\II^+$ and $\II^-$ are the positive and negative intervals of the real line, respectively. 
Then we define its {\it balance} with respect to the involution $\sigma$ as follows
\begin{equation*}
\mathscr{B}_{\sigma}(\kappa)(x)=\dfrac{\kappa(x)-\kappa(\sigma(x))}{2},
\end{equation*}
or equivalently
\begin{equation*}
\mathscr{B}_{\sigma}(\kappa)(x)=\begin{cases}
\dfrac{\kappa_{1}(x)-\kappa_{2}(\sigma_{1}(x))}{2}, & x\in \II^{+}, \\ 
\dfrac{\kappa_{2}(x)-\kappa_{1}(\sigma_{2}(x))}{2}, & x\in \II^{-}.
\end{cases} 
\end{equation*}
The balance of a function $\kappa$ with respect to an involution $\sigma$ is related to the odd part of $\kappa$. 
If $\kappa$ is a balanced function, then its odd part is identically zero. Conversely, if the odd part of $\kappa$ is identically 
zero, then $\kappa$ is a balanced function. 

The main results of the paper can be given as follows. In the following theorems, we consider line integrals where the piecewise Hamiltonian differential systems are 
consisted of two distinct zones.
\begin{theorem}\label{the1}
Consider the following line integrals
\begin{equation*} 
L_i (h)=
\begin{cases}
I_\frac{i}{2}(h), & i = 0, 2, \ldots, 2n-2, \\ 
J_\frac{i-1}{2}(h), & i = 1, 3, \ldots, 2n-1,
\end{cases} 
\end{equation*} 
with
\begin{equation*}
I_{i}(h)=\int_{\Gamma_{h}^{+}}f_{1i}(x)y^{2s-1} dx,\qquad
J_{i}(h)=\int_{\Gamma_{h}^{-}}f_{2i}(x)y^{2s-1} dx,
\end{equation*}
where, for each $h\in(0,h_{0})$, $\Gamma_{h}^{+}\cup\Gamma_{h}^{-}$ is the oval around the origin defined 
as 
\begin{equation*}
\Gamma^{+}_{h}\subset\lbrace \chi_{1}(x)y^{2}+\Psi_{1}(x)=h\mid x>0\rbrace,\qquad
\Gamma^{-}_{h}\subset\lbrace \chi_{2}(x)y^{2}+\Psi_{2}(x)=h\mid x<0\rbrace.
\end{equation*}
Let $\sigma$ be the involution associated with $\Psi_{1}$ and $ \Psi_{2} $, and define
\begin{equation*} 
l_i(x)= 
\begin{cases}
\left( \dfrac{f_{1\left( \frac{i}{2}\right)} }{\chi_{1}^{\frac{2s-1}{2}}}\right) (x), & i=0,2,\ldots,2n-2, \\ 
\\
-\left( \dfrac{f_{2\left( \frac{i-1}{2}\right) }}
{\chi_{2}^{\frac{2s-1}{2}}}\right)(\sigma(x))\sigma^{\prime}(x), & i=1,3,\ldots,2n-1.
\end{cases} 
\end{equation*} 
Then $ \lbrace L_{0},L_{1},\ldots,L_{2n-1}\rbrace $ is an ECT-system on $(0,h_{0})$ if 
$ \lbrace l_{0},l_{1},\ldots,l_{2n-1}\rbrace$ is a CT-system on $(0,x_{r})$ and $s>2(n-1)$.
\end{theorem}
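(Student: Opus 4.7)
The plan is to reduce the combined-oval integrals to a single family of line integrals on $(0,a^+(h))$, where $a^+(h)$ denotes the positive root of $\Psi_1(x)=h$, and then apply a Chebyshev-type criterion for Abel integrals in the spirit of Grau-Ma\~{n}osas-Villadelprat. To set up the reduction, I would parametrise $\Gamma_h^+$ by $y=\pm\sqrt{(h-\Psi_1(x))/\chi_1(x)}$ and use that $y^{2s-1}$ is odd in $y$ while the top and bottom halves of $\Gamma_h^+$ are traversed with opposite orientations of $dx$; this yields
\[
I_k(h)=2\int_0^{a^+(h)} l_{2k}(x)\bigl(h-\Psi_1(x)\bigr)^{(2s-1)/2}\,dx.
\]
The same parity argument applied to $J_k$ produces an integral over $(a^-(h),0)$. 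Substituting $x=\sigma(u)$, and using $\Psi_2(\sigma(u))=\Psi_1(u)$ together with $\sigma(a^+(h))=a^-(h)$ and $\sigma'<0$, pushes the domain to $(0,a^+(h))$ and produces exactly the function $l_{2k+1}$ of the statement. Consequently every nontrivial combination $\sum_{i=0}^{k}c_iL_i(h)$ equals $2F_k(h)$ with
\[
F_k(h)=\int_0^{a^+(h)}P_k(x)\bigl(h-\Psi_1(x)\bigr)^{(2s-1)/2}\,dx,\qquad P_k=\sum_{i=0}^{k}c_il_i,
\]
and by the CT hypothesis $P_k$ has at most $k$ zeros on $(0,x_r)$.

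It then suffices to show that $F_k$ has at most $k$ zeros counted with multiplicity on $(0,h_0)$ for every $0\le k\le 2n-1$; this is exactly the ECT condition for $\{L_0,\ldots,L_{2n-1}\}$. The natural tool is iterated differentiation under the integral sign: because $h-\Psi_1(a^+(h))=0$, the boundary term at the $j$-th differentiation vanishes precisely when $(2s-1)/2-(j-1)>0$, and one obtains
\[
F_k^{(j)}(h)=C_j\int_0^{a^+(h)}P_k(x)\bigl(h-\Psi_1(x)\bigr)^{(2s-1)/2-j}\,dx,\qquad C_j\neq 0.
\]
The hypothesis $s>2(n-1)$ is exactly what makes this identity legitimate up to $j=k$ for every $k\le 2n-1$ (with $s$ a positive integer, $s\ge 2n-1\ge k$). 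Combining it with Rolle's theorem and a Chebyshev-type criterion for Abelian integrals then translates the CT bound on $P_k$ into the ECT bound on $F_k$.

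The main obstacle is precisely this last conversion, from a bound on zeros of $P_k$ without multiplicity (CT) to a bound with multiplicity (ECT). Rolle alone does not close the argument, because derivatives of $F_k$ can in principle acquire new zeros that are not inherited from sign changes of $P_k$. The rigorous route is to show that Abel-type smoothing by the kernel $(h-\Psi_1(x))^{(2s-1)/2}$ cannot create extra oscillations, or equivalently that any excess multiplicity of a zero of $F_k$ would force an additional sign change of $P_k$ forbidden by the CT hypothesis. All piecewise features of the problem are absorbed in the reduction step, so once that is done the analytic core is identical to the classical smooth single-zone setting treated by Grau-Ma\~{n}osas-Villadelprat.
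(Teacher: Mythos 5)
Your reduction step is essentially the paper's own: the parity argument in $y$, the change of variable $x=\sigma(u)$ on the left branch, and the differentiation-under-the-integral formula
\[
L_i^{(j)}(h)=c_j\int_0^{x_h^+} l_i(x)\,\bigl(h-\Psi_1(x)\bigr)^{\frac{2(s-j)-1}{2}}\,dx
\]
are exactly Lemma 3.2 of the paper (with the $\chi_i$ absorbed at the end by the coordinate change $(u,v)=(x,\sqrt{\chi_i(x)}\,y)$, a step you gloss over but which is routine). The problem is what you do next. You propose to bound the zeros, with multiplicity, of a single linear combination $F_k(h)=\int_0^{a^+(h)}P_k(x)(h-\Psi_1(x))^{(2s-1)/2}dx$ using Rolle plus the fact that $P_k$ has at most $k$ zeros, and you yourself flag that this does not close: Rolle controls zeros of $F_k^{(j)}$ only downward in $j$, and nothing you have written rules out the kernel creating extra oscillations. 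That is a genuine gap, not a technicality -- the ``variation-diminishing'' property of the kernel that you appeal to is precisely the theorem to be proved, and the route you sketch (excess multiplicity of $F_k$ forces an extra sign change of $P_k$) is not supplied and is not how either this paper or Grau--Ma\~nosas--Villadelprat actually argue.

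The paper closes the argument by a different mechanism that never looks at zeros of a single combination. It invokes Lemma 2.3 in both directions: the CT property of $\{l_0,\dots,l_{2n-1}\}$ is \emph{equivalent} to the non-vanishing of all discrete Wronskians $D[\mathbf{l_k}](\mathbf{x_k})$ at distinct points, and the ECT property of $\{L_0,\dots,L_{2n-1}\}$ is \emph{equivalent} to the non-vanishing of all continuous Wronskians $W[\mathbf{L_k}](h)$. Proposition 3.3 then connects the two by expanding $W[\mathbf{L_k}](h)=\det\bigl(L_j^{(i)}(h)\bigr)$ with the Leibniz formula and pulling the sum over permutations inside the multiple integral, yielding
\[
W[\mathbf{L_k}](h)=m_{k-1}\int_0^{x_h^+}\!\!\cdots\int_0^{x_h^+} D[\mathbf{l_k}](\mathbf{x_k})\prod_{i=0}^{k-1}\tilde y_h^+(x_i)^{2(s-i)-1}\,dx_0\cdots dx_{k-1},
\]
after which sign-definiteness of the discrete Wronskian (restricting to the simplex) forces $W[\mathbf{L_k}](h)\neq 0$. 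The hypothesis $s>2(n-1)$ enters only to guarantee that all $2n-1$ differentiations are legitimate, exactly as you identified. If you want to complete your write-up, replace the Rolle step with this Wronskian--composition-formula argument; as it stands, the central implication (CT of the $l_i$ implies ECT of the $L_i$) is asserted rather than proved.
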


\begin{theorem}\label{the2}
Consider the following line integrals
\begin{equation*}
I_{i}(h)=\int_{\Gamma_{h}^{+}}f_{i}(x)y^{2s-1} dx,\qquad i=0,1,\ldots,n-1,
\end{equation*}
where, for each $h\in(0,h_{0})$, $\Gamma_{h}^{+}$ is the arc with the level curve $ \lbrace 
\chi(x)y^{2}+\Psi_{1}(x)=h, x>0\rbrace $. Let $\sigma$ be the involution associated with $\Psi_{1}$ and $ \Psi_{2} $, and $\sigma^{\prime}(x)$ is a constant function 
on $(x_{l},x_r)\setminus\lbrace 0 \rbrace$, and define
\begin{equation*}
l_{i}(x)= \left( \dfrac{f_{i}}{\chi_{1}^{\frac{2s-1}{2}}}\right) (x).
\end{equation*}
Then $ \lbrace I_{0},I_{1},\ldots,I_{n-1}\rbrace $ is an ECT-system on $(0,h_{0})$ if 
$\lbrace l_{0}, l_{1},\dots, l_{n-1}\rbrace$ is a CT-system on $(0,x_{r})$ and $s>n-2$.
\end{theorem}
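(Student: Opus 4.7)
The plan is to reduce each line integral $I_i$ to a one-variable integral on $(0,x^+(h))$, transport the problem to the $h$-axis via the monotone substitution $u=\Psi_1(x)$, and then apply an iterated Rolle-type argument whose applicability is controlled by the bound $s>n-2$.

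First, I would parametrize $\Gamma_h^+$ by $y=\pm\sqrt{(h-\Psi_1(x))/\chi_1(x)}$. Since $y^{2s-1}$ is odd in $y$ while the upper and lower halves of $\Gamma_h^+$ are traversed in opposite $x$-directions, the two contributions add to give
\[
 I_i(h) \;=\; -2\int_0^{x^+(h)} l_i(x)\bigl(h-\Psi_1(x)\bigr)^{s-1/2}\,dx,
\]
where $x^+(h)$ is defined by $\Psi_1(x^+(h))=h$, well-defined by the monotonicity in (H1). For a non-trivial combination $L=\sum_{i=0}^{n-1}a_iI_i$, writing $l=\sum a_i l_i$ gives the same formula with $l_i$ replaced by $l$; the CT hypothesis forces $l$ to have at most $n-1$ zeros on $(0,x_r)$.

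Next, the substitution $u=\Psi_1(x)$, a diffeomorphism from $(0,x_r)$ onto $(0,h_0)$ by (H1), yields
\[
 L(h) \;=\; -2\int_0^h \phi(u)(h-u)^{s-1/2}\,du,\qquad \phi(u)=\frac{l(\Psi_1^{-1}(u))}{\Psi_1'(\Psi_1^{-1}(u))}.
\]
Because $\Psi_1'>0$, the map $l\mapsto\phi$ is a sign-preserving bijection on zeros, so $\phi$ also has at most $n-1$ zeros on $(0,h_0)$.

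Finally, noting $L(0)=0$, iterated differentiation under the integral sign gives
\[
 L^{(k)}(h) \;=\; c_k\int_0^h \phi(u)(h-u)^{s-1/2-k}\,du,
\]
valid for $k$ satisfying $s-1/2-k>-1$; the hypothesis $s>n-2$ keeps this exponent admissible for $k=0,\ldots,n-1$, while also preserving $L^{(k)}(0)=0$ for $k<n-1$. A standard iterated Rolle argument, exploiting the vanishing of $L^{(j)}$ at $0$, shows that if $L$ has $p$ zeros with multiplicity in $(0,h_0)$, then so does $L^{(n-1)}$; an integration by parts (or a direct lemma on fractional Volterra operators) then bounds this number by the zero count of $\phi$, yielding $p\leq n-1$ and the ECT property.

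The main obstacle is the last step when $s-1/2$ is not an integer: the iterated derivatives terminate in an integral with a mildly singular kernel, and translating its zero count (with multiplicity) into the (at most $n-1$) zeros of $\phi$ itself requires an integration-by-parts trick replacing $\phi$ by its primitive, or an appeal to a separate zero-counting lemma. This is where the sharp form of $s>n-2$ is needed; the constant-$\sigma'$ assumption, by contrast, enters tacitly, ensuring that the one-sided arc $\Gamma_h^+$ behaves symmetrically at the switching line and the above reduction requires no boundary correction from the left half of the oval.
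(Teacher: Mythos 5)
Your reduction of $I_i(h)$ to $2\int_0^{x^+(h)} l_i(x)\,\bigl(h-\Psi_1(x)\bigr)^{s-1/2}\,dx$ (the sign should be $+$ for the clockwise orientation, but that is immaterial) and the subsequent differentiation under the integral sign match the paper's Lemma \ref{lem1}, and you correctly identify the role of $s>n-2$: it makes the boundary term vanish at each of the $n-1$ differentiations. The gap is exactly where you place it, and it is not a presentational obstacle: after $n-1$ differentiations you hold $L^{(n-1)}(h)=c\int_0^h\phi(u)(h-u)^{\alpha}\,du$ with $\alpha=s-n+\tfrac12$, which can be negative (e.g.\ $\alpha=-\tfrac12$ at the minimal admissible integer $s=n-1$), and the assertion that this function has no more zeros than $\phi$ is the entire content of the theorem, not a standard step. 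Neither of your proposed fixes works as stated: integrating by parts to replace $\phi$ by its primitive $\Phi$ produces the boundary term $\Phi(h)\,(h-u)^{\alpha}\big|_{u=h}$, which is infinite rather than zero when $\alpha\le 0$ (and the new integral against $(h-u)^{\alpha-1}$ diverges); and the variation-diminishing property of the kernel $(h-u)_+^{\alpha}$ controls sign changes, not zeros counted with multiplicity, which is what the ECT conclusion demands. There is a second multiplicity mismatch upstream: the hypothesis is only that $\{l_0,\dots,l_{n-1}\}$ is a CT-system, so $\phi$ is only known to have at most $n-1$ isolated zeros \emph{without} multiplicity, whereas your iterated Rolle argument must propagate zeros \emph{with} multiplicity.

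The paper avoids both difficulties by never isolating a single linear combination: it verifies the ECT property through Lemma \ref{lem4}(b), computing each continuous Wronskian $W[\bold{I_k}](h)$ via the Leibniz permutation expansion as a $k$-fold integral of the discrete Wronskian $D[\bold{l_k}](\bold{x_k})$ against the kernel $\prod_i y_i^{2(s-i)-1}$ (Proposition \ref{pro2}), and then invokes Lemma \ref{lem4}(a), which converts the CT hypothesis directly into nonvanishing of $D[\bold{l_k}]$ at distinct points; no zero-counting of a fractional Volterra operator is ever needed. If you wish to pursue your one-dimensional route, the missing ingredient is a genuine (extended) total-positivity statement for $(h-u)_+^{\alpha}$, a substantially harder input than anything used in the paper. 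Finally, your reading of the constant-$\sigma'$ hypothesis is off: the one-sided arc $\Gamma_h^+$ needs no boundary correction in any case; that hypothesis is what makes the left-hand integrals $J_i$ collapse onto linear combinations of the $I_i$, so that Theorem \ref{the2} applies to the full Melnikov function, as in the paper's third example.
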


Note that, by applying Lemma \ref{lem4}, we can show that the set of functions $\lbrace f_0, f_1,\dots, f_{n-1}\rbrace$ is an ECT-system on 
$(0,x_{r})$, which also implies that $\lbrace f_0, f_1,\dots, f_{n-1}\rbrace$ is also a CT-system on $(0,x_{r})$. Also, if the 
assumptions $s>2(n-1)$ and $s>n-2$ are not satisfied in Theorems \ref{the1} and \ref{the2}, respectively, we can use Lemma \ref{lems} 
to promote the value of $s$. 

The paper is structured as follows. In the second section, we introduce essential tools and notations that will be used to prove our main
results. First, we explain several concepts and definitions related to piecewise smooth systems,
and we introduce the first-order Melnikov function for piecewise smooth systems. Also, we present the Chebyshev
property, which is a crucial concept in the study of limit cycles of dynamical systems. In the third section, we apply the tools and notations introduced in the previous 
section to prove the main results of our paper. Finally, in the fourth section, we present novel results that have been obtained by applying the main theorems of the
paper.

\section{Preliminaries}\label{sec2}
In this section, we introduce some key concepts and definitions that enable us to precisely state our results.

Let $\Sigma \in \mathbb{R}^2$ be defined as $\Sigma = f^{-1}(0)$, where $f:\mathbb{R}^{2}\rightarrow \mathbb{R}$ is a 
smooth function with $0$ as a regular value (i.e., $\nabla f(p) \neq 0$ for any $p\in f^{-1}(0)$). Additionally, let $\Omega^r$ be the 
space of $C^r$ vector fields on $\mathbb{R}^2$ for $r\geqslant 1$. We consider the planar piecewise vector fields of the form
\begin{equation}\label{pwvf}
Z(x,y)= 
\begin{cases}
Z^{+}(x,y),&\text{for}~ (x,y)\in \Sigma^{+},\\
Z^{-}(x,y),&\text{for}~ (x,y)\in \Sigma^{-}, 
\end{cases} 
\end{equation}
where $Z^{\pm}\in \Omega^r$ and 
\begin{equation*}
\Sigma^{+}=\lbrace (x,y)\in \mathbb{R}^2 \mid f(x,y)> 0\rbrace,\quad
\Sigma^{-}=\lbrace (x,y)\in \mathbb{R}^2 \mid f(x,y)< 0\rbrace,
\end{equation*}
and the {\it switching boundary} $\Sigma$ corresponds to the boundary between these two regions.

The piecewise vector field $Z$ is said to be {\it continuous} if it satisfies $Z^{+} = Z^-$ on $\Sigma$. Otherwise, 
we say that it is {\it discontinuous}. A {\it crossing point} 
is defined as a point $p\in \Sigma$ where both vector fields are transversal to the switching boundary, and their normal components have 
the same sign. Thus, the {\it crossing region} is defined as
\begin{equation*}
\Sigma^{c}=\lbrace p\in \Sigma \mid Z^{+}f(p)Z^{-}f(p)>0\rbrace,
\end{equation*}
where $Z^{\pm}f(p)=\langle \bigtriangledown f(p), Z^{\pm}(p) \rangle$.
A {\it sliding/escaping point} $p$ is defined as a point on $\Sigma$ where both vector fields simultaneously point inward or outward 
from $\Sigma$, respectively. Then, the {\it sliding and escaping regions} are defined as
\begin{equation*}
\Sigma^{s}=\lbrace p\in \Sigma \mid Z^{+}f(p)<0, Z^{-}f(p)>0\rbrace,
\end{equation*}
and
\begin{equation*}
\Sigma^{e}=\lbrace p\in \Sigma \mid Z^{+}f(p)>0, Z^{-}f(p)<0\rbrace.
\end{equation*}

The solutions of the differential system $\dot{q} = Z(q)$ are the trajectories of $Z$, where the right-hand side of 
the equation is generally discontinuous. For basic concepts and results of ordinary differential equations with discontinuous right-hand 
side, see reference \cite{Fil}.

We here are interested in studying discontinuous systems that have the property $Z^{+}f(p)=Z^{-}f(p) $ for all $p\in \Sigma$, 
and such systems are known as {\it refracted systems}, see \cite{BMT,PRV}. We should note that in refracted systems, crossing regions 
are the only ones that exist.
\begin{figure}[] 
\centering
\includegraphics[scale=.8]{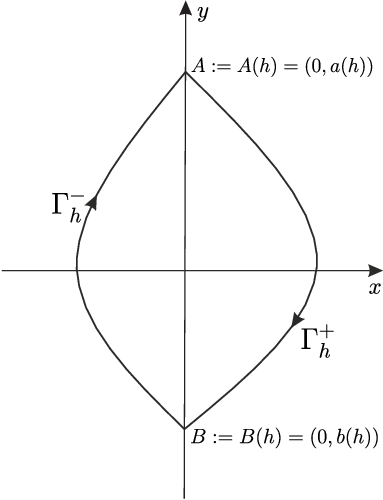}
\caption{The periodic orbit of system $\eqref{hamiep}|_{\varepsilon=0}$. \label{fig1}}
\end{figure}

Now, we present the first order Melnikov function of piecewise smooth differential systems.
Let us consider the piecewise polynomial near-Hamiltonian differential systems of the form
\begin{equation}\label{hamiep}
\left(\dot x, \dot y \right)= \begin{cases}
\left( H^{+}_{y}(x,y), -H^{+}_{x}(x,y)\right) +\varepsilon \left( p^{+}(x,y), q^{+}(x,y)\right), & x> 0,\\
\left( H^{-}_{y}(x,y), -H^{-}_{x}(x,y)\right) +\varepsilon \left( p^{-}(x,y), q^{-}(x,y)\right), & x< 0,
\end{cases} 
\end{equation}
where $p^{\pm}(x,y)$, $q^{\pm}(x,y)$ and $H^{\pm}(x,y)$ are real polynomials and $\varepsilon$
is a small real parameter. System \eqref{hamiep} can be separated into two analytic subsystems,
i.e. the right subsystem and the left subsystem, respectively,
\begin{equation}\label{hamiep1}
\begin{cases}
\dot x=H_{y}^{+}(x,y)+\varepsilon p^{+}(x,y), \\
\dot y=-H_{x}^{+}(x,y)+\varepsilon q^{+}(x,y), \tag{1a}
\end{cases}
\end{equation}
and
\begin{equation}\label{hamiep2}
\begin{cases}
\dot x=H_{y}^{-}(x,y)+\varepsilon p^{-}(x,y), \\
\dot y=-H_{x}^{-}(x,y)+\varepsilon q^{-}(x,y). \tag{1b}
\end{cases}
\end{equation}
We will suppose that $\eqref{hamiep}|_{\varepsilon=0}$ has a family of periodic orbits around the origin. For system 
$\eqref{hamiep}|_{\varepsilon=0}$, we make the following assumptions: \\
\textbf{Assumption (I).}\label{assumption1}
There exists an interval $\Omega=(h_{1},h_{2})$ and two points $A(h)=(0,a(h))$ and $B(h)=(0,b(h))$ such that for $h\in \Omega$,
\begin{equation*}
H^{+}(A(h))=H^{+}(B(h))=h,\qquad H^{-}(A(h))=H^{-}(B(h))=\tilde{h}, 
\end{equation*}
where $b(h)<0<a(h)$.\\
\textbf{Assumption (II).}
The subsystem $ \eqref{hamiep1}|_{\varepsilon=0} $ has an orbital arc $\Gamma_{h}^{+}$ starting from $A(h)$ and ending at $B(h)$
defined by $H^{+}(x,y)=h$, $x> 0$; the subsystem $ \eqref{hamiep2}|_{\varepsilon=0} $ has an orbital 
arc $\Gamma_{h}^{-}$ starting from $B(h)$ and ending at $A(h)$ defined by $H^{-}(x,y)=H^{-}(B(h))=\tilde{h}$, $x<0$.

Under the above assumptions, the unperturbed system $\eqref{hamiep}\vert_{\varepsilon=0}$ has a family of non-smooth 
periodic orbits $ \Gamma_{h}=\Gamma_{h}^{+}\cup \Gamma_{h}^{-} $, $h\in \Omega$. For definiteness, we assume that the 
orbits $\Gamma_{h}$ for $h\in \Omega$ is orientated clockwise; see Figure \ref{fig1}. The authors in \cite{LH}
defined a bifurcation function $F(h, \varepsilon)$ for system \eqref{hamiep}, where $F(h,0)=I(h)$.
Given Assumptions (I) and (II), the first order Melnikov function of system \eqref{hamiep} was derived by them as follows:
\begin{equation}\label{Melnikov}
I(h):=\dfrac{H_{y}^{+}(A)}{H_{y}^{-}(A)}\left[ \dfrac{H_{y}^{-}(B)}{H_{y}^{+}(B)}\int_{\Gamma_{h}^{+}}q^{+}dx-
p^{+}dy+\int_{\Gamma_{h}^{-}}q^{-}dx-p^{-}dy\right],\qquad h\in \Omega.
\end{equation}

Finally, we introduce the notion of Chebyshev systems. 
The reader is referred to \cite{GMV,MV} for more details on the following definitions and the next results.
\begin{definition}
Let $f_{0},f_{1},\ldots,f_{n-1}$ be real analytic functions on some
open interval $\mathbb{I}$ of $\mathbb{R}$. Then
\begin{enumerate}
\item[(i)] The set $\lbrace f_{0},f_{1},\ldots,f_{n-1}\rbrace$ is called a Chebyshev system (for short, a T-system) on
$\mathbb{I}$ if any nontrivial linear combination
\begin{align*}
\alpha_{0}f_{0}(x)+\alpha_{1}f_{1}(x)+\ldots+\alpha_{n-1}f_{n-1}(x)
\end{align*}
has at most $n-1$ isolated zeros on $\mathbb{I}$.
\item[(ii)] The set $\lbrace f_{0},f_{1},\ldots,f_{n-1}\rbrace$ is called a complete Chebyshev
system (for short, a CT-system) on $\mathbb{I}$ if for all $k=1,2,\ldots,n$, any nontrivial
linear combination
\begin{align*}
\alpha_{0}f_{0}(x)+\alpha_{1}f_{1}(x)+\ldots +\alpha_{k-1}f_{k-1}(x)
\end{align*}
has at most $k-1$ isolated zeros on $\mathbb{I}$.
\item[(iii)] The ordered set $\lbrace f_{0},f_{1},\ldots,f_{k-1}\rbrace$ is called an extended complete
Chebyshev system (for short, an ECT-system) on $\mathbb{I}$ if for all $k=1,2,...,n$, any
nontrivial linear combination
\begin{align*}
\alpha_{0}f_{0}(x)+\alpha_{1}f_{1}(x)+\ldots+\alpha_{k-1}f_{k-1}(x)
\end{align*}
has at most $k-1$ isolated zeros on $\mathbb{I}$ counting multiplicity.
\end{enumerate}
\end{definition}

\begin{definition}
Let $f_{0},f_{1},\ldots,f_{k-1}$ be real analytic functions on some open 
interval $\mathbb{I}$ of $\mathbb{R}$. The {\it continuous Wronskian} of $ \lbrace f_{0},f_{1},\ldots,f_{k-1} \rbrace $ at $x\in \mathbb{I}$ is
\begin{equation*} 
W[\bold{f_k}](x)= \det
\begin{bmatrix} 
f_0(x) & f_1(x) & \cdots & f_{k-1}(x) \\ 
f^{\prime}_0(x) & f^{\prime}_1(x) & \cdots & f^{\prime}_{k-1}(x) \\ 
\vdots & \vdots & \ddots & \vdots \\ 
f^{(k-1)}_0(x) & f^{(k-1)}_1(x) & \cdots & f^{(k-1)}_{k-1}(x) 
\end{bmatrix}.
\end{equation*}
The {\it discrete Wronskian} of $\lbrace f_{0},f_{1},\ldots,f_{k-1} \rbrace $ at $ (x_{0},x_{1},\ldots,x_{k-1})\in \mathbb{I}^k$
is
\begin{equation*}
D[\bold{f_k}](\bold{x_k})= \det 
\begin{bmatrix} 
f_0(x_0) & f_1(x_0) & \cdots & f_{k-1}(x_0) \\ 
f_0(x_1) & f_1(x_1) & \cdots & f_{k-1}(x_1) \\ 
\vdots & \vdots & \ddots & \vdots \\
f_0(x_{k-1}) & f_1(x_{k-1}) & \cdots & f_{k-1}(x_{k-1}) 
\end{bmatrix}.
\end{equation*}
\end{definition}

Note that in the above definitions we used the notation 
\begin{equation*}
\bold{f_k}=f_{0},f_{1},\ldots,f_{k-1}\quad \text{and}\quad \bold{x_k}=x_{0},x_{1},\ldots,x_{k-1}.
\end{equation*}
\begin{lemma}\label{lem4}(\cite{KS,Ma}) The following statements hold:
\begin{itemize}
\item[(a)] The set of functions $\lbrace f_{0},f_{1},\ldots,f_{n-1}\rbrace$ is a CT-system on $\mathbb{I}$ if and only if, for each
$k=1,2,\ldots,n$,
\begin{equation*}
D[\bold{f_k}](\bold{x_k})\neq 0,\;\;\ \text{for all}\;\ \bold{x_k}\in \mathbb{I}^{k}\;\ \text{such that}\;\ x_{i}\neq x_{j}\;\ \text{for}
\;\ i\neq j.
\end{equation*}
\item[(b)]The set of functions $\lbrace f_{0},f_{1},\ldots,f_{n-1}\rbrace$ is an ECT-system on $\mathbb{I}$ if and only if, for each
$k=1,2,\ldots,n$,
\begin{align*}
W[\bold{f_k}](x)\neq 0,\;\;\ \text{for all}\;\ x\in \mathbb{I}.
\end{align*}
\end{itemize}
\end{lemma}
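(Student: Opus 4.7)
The plan is to prove each equivalence by relating the vanishing of a Wronskian at a point (resp.\ at a distinct tuple) to the existence of a nontrivial linear combination with a forbidden number of zeros.

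For part (a), I would first record the key observation that $D[\mathbf{f_k}](\mathbf{x_k})=0$ is exactly the condition that the linear system
\[
\sum_{i=0}^{k-1}\alpha_i\, f_i(x_j)=0,\qquad j=0,1,\ldots,k-1,
\]
admits a nontrivial solution, equivalently that some nontrivial $g=\sum\alpha_i f_i$ vanishes at the $k$ distinct points $x_0,\ldots,x_{k-1}$. One direction is then contrapositive: if $D$ vanishes at some distinct tuple, the associated $g$ has at least $k$ zeros, and by analyticity these are isolated unless $g\equiv 0$; the latter case is ruled out by taking the smallest $k$ for which this happens, since it would give a nontrivial combination with infinitely many zeros, violating the CT property. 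The reverse direction is immediate: if some nontrivial combination had $\geq k$ isolated zeros, picking any $k$ of them as $\mathbf{x_k}$ would make $D$ vanish.

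For part (b), the \emph{only if} direction is parallel: if $W[\mathbf{f_k}](x_0)=0$, the system $g^{(j)}(x_0)=0$ for $j=0,\ldots,k-1$ has a nontrivial solution, producing a $g$ with a zero of multiplicity at least $k$ at $x_0$, contradicting the ECT property. The \emph{if} direction is the substantive one; I would prove it by induction on $k$. The base $k=1$ is immediate since $W[\mathbf{f_1}]=f_0$ is non-vanishing. For the inductive step, assume the result for systems of length $k-1$ and suppose, for contradiction, that a nontrivial $g=\sum_{i=0}^{k-1}\alpha_i f_i$ has at least $k$ zeros counted with multiplicity. Setting
\[
h(x):=\left(\frac{g(x)}{f_0(x)}\right)' = \sum_{i=1}^{k-1}\alpha_i\left(\frac{f_i}{f_0}\right)'(x),
\]
a Rolle-type argument shows that $h$ has at least $k-1$ zeros counted with multiplicity. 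The classical identity
\[
W\!\left[\left(\tfrac{f_1}{f_0}\right)',\ldots,\left(\tfrac{f_j}{f_0}\right)'\right] = \frac{W[\mathbf{f_{j+1}}]}{f_0^{j+1}},\qquad j=1,\ldots,k-1,
\]
together with $f_0\neq 0$ transfers the Wronskian hypothesis to the family $\{(f_i/f_0)'\}_{i=1}^{k-1}$, so the induction hypothesis forces $h$ to have at most $k-2$ zeros, a contradiction.

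The main obstacle I anticipate is the careful multiplicity bookkeeping in the Rolle step: one must verify that a zero of order $m$ of $g/f_0$ at a point contributes a zero of order exactly $m-1$ for $h$ there, while Rolle supplies at least one additional zero strictly between any two distinct zeros of $g/f_0$, so that the total count drops by exactly one. A secondary, routine technical point is the Wronskian identity above, which follows by differentiating the columns and performing determinant row/column operations that factor out one power of $f_0$ per column.
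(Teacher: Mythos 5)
The paper offers no proof of this lemma: it is quoted from Karlin--Studden and Marde\v{s}i\'{c} \cite{KS,Ma}, so there is no in-paper argument to compare against. Your proposal reconstructs the classical proof from those references and is essentially correct. The identification of $D[\bold{f_k}](\bold{x_k})=0$ with the existence of a nontrivial combination vanishing at the $k$ prescribed points (resp.\ of $W[\bold{f_k}](x_0)=0$ with a nontrivial combination having a zero of multiplicity at least $k$ at $x_0$) handles the easy directions, and the induction via $h=(g/f_0)'$, Rolle with multiplicities, and the reduction identity $W[(f_1/f_0)',\ldots,(f_j/f_0)']=W[\bold{f_{j+1}}]/f_0^{j+1}$ is exactly the textbook route for the substantive direction of (b). The bookkeeping you flag does close: a zero of $g/f_0$ of order $m_i$ gives a zero of order $m_i-1$ of $h$ at the same point, and Rolle inserts $r-1$ further zeros strictly between the $r$ distinct zeros, so $\sum m_i\ge k$ forces at least $k-1$ zeros of $h$ against the inductive bound of $k-2$; note also that $h$ is automatically a nontrivial combination, since $\alpha_1=\cdots=\alpha_{k-1}=0$ would make $g=\alpha_0 f_0$ zero-free.

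The one soft spot is your disposal of the case $g\equiv 0$. Under the paper's literal definitions, a nontrivial combination that vanishes identically has no \emph{isolated} zeros, so it does not by itself contradict the CT or ECT property; your phrase about ``a nontrivial combination with infinitely many zeros, violating the CT property'' silently invokes the convention, standard in \cite{KS,Ma}, that the functions of a Chebyshev system are linearly independent (equivalently, that an identically vanishing nontrivial combination is counted as having too many zeros). Without that convention the stated equivalence fails in degenerate cases, e.g.\ $n=1$ and $f_0\equiv 0$. Either add linear independence as a standing hypothesis or adopt the zero-counting convention explicitly; with that single clarification your proof is complete.
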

\section{Proof of the main results}\label{sec3}
In this section we will prove Theorems \ref{the1} and \ref{the2}. 
Let us begin by assuming that $\chi_{1}(x)=\chi_{2}(x)=1$ in the Hamiltonian function \eqref{Hm}. In this case, we have
\begin{equation}\label{hamm1}
H(x,y)=\begin{cases}
H^{+}(x,y)=y^{2}+\Psi_{1}(x), & x> 0,\\
H^{-}(x,y)=y^{2}+\Psi_{2}(x), & x< 0, 
\end{cases}
\end{equation}
and also the corresponding perturbed Hamiltonian system is given by
\begin{equation}\label{Ham1}
\left(\dot x, \dot y \right)= \begin{cases}
\left(2 y, -\Psi_{1}^{\prime}(x)\right) +\varepsilon \left( p^{+}(x,y), q^{+}(x,y)\right), & x> 0,\\
\left(2 y, -\Psi_{2}^{\prime}(x)\right) +\varepsilon \left( p^{-}(x,y), q^{-}(x,y)\right), & x< 0,
\end{cases}
\end{equation}
where $0<|\varepsilon|\ll 1$ and
\begin{equation*}
p^{\pm}(x,y)=\sum_{i+j=0}^{n}a^{\pm}_{ij}x^{i}y^{j},\qquad q^{\pm}(x,y)=\sum_{i+j=0}^{n}b^{\pm}_{ij}x^{i}y^{j}.
\end{equation*}
The assumptions (\ref{H1}) on $H$ implies both Assumptions $\text{(I)}$ and $\text{(II)}$. Therefore formula \eqref{Melnikov} for system \eqref{Ham1} changes to 
the following form
\begin{equation}\label{Melnk1}
I(h)=\int_{\Gamma_{h}^{+}}q^{+}(x,y)dx-p^{+}(x,y)dy+\int_{\Gamma_{h}^{-}}q^{-}(x,y)dx-
p^{-}(x,y)dy,\qquad h\in \Omega ,
\end{equation}
with
\begin{equation*}
\Gamma^{+}_{h}=\lbrace(x,y)\in \mathbb{R}^{2}\vert H^{+}(x,y)=h, x> 0\rbrace,\qquad
\Gamma^{-}_{h}=\lbrace(x,y)\in \mathbb{R}^{2}\vert H^{-}(x,y)=h, x< 0\rbrace.
\end{equation*}
Note that $\tilde{h}$ is equal to $h$, as $\Psi_1(0)$ and $\Psi_2(0)$ both equal zero.
Now, we will first obtain the algebraic structure of Melnikov function $I(h)$ for systems \eqref{hamm1}. 
\begin{lemma}
Assuming that the function $H(x,y)$ defined in \eqref{hamm1} has a family of ovals $\Gamma_{h}^{+}\cup\Gamma_{h}^{-}$,
we can express $I(h)$ for $h\in (0,h_0)$ in the system \eqref{Ham1} as follows
\begin{equation}\label{Melnk2}\displaystyle
I(h)=\begin{cases}
\tilde{a}_{01}\displaystyle\int_{\Gamma^{+}_{h}}ydx+\tilde{b}_{01}\displaystyle\int_{\Gamma^{-}_{h}}ydx,&n=1,\\
\\
\displaystyle\int_{\Gamma^{+}_{h}}p_{1}(x)ydx+\displaystyle\int_{\Gamma^{-}_{h}}q_{1}(x)ydx,&n=2,\\
\\
\displaystyle\int_{\Gamma^{+}_{h}}p_{k_1}(x,h)ydx+\displaystyle\int_{\Gamma^{-}_{h}}q_{k_2}(x,h)ydx,& n\geq 3,
\end{cases}
\end{equation} 
where $p_{1}$ and $q_{1}$ are linear functions in $x$, and $p_{k_1}(x, h)$ and $q_{k_2}(x, h)$ are polynomials in $x$ and $h$ 
of degree $k_{i} = \dfrac{m_{i}(n-1)}{2}$ if $n$ is odd and $k_{i} = \dfrac{m_{i}(n-2)}{2}+1$ if $n$ is even, where 
$m_{i}=\operatorname{deg}(\Psi_{i}), i=1,2$.
\end{lemma}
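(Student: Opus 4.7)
The plan is to substitute the explicit polynomial perturbations $p^{\pm}(x,y)=\sum_{i+j\leq n} a_{ij}^{\pm} x^{i}y^{j}$ and $q^{\pm}(x,y)=\sum_{i+j\leq n} b_{ij}^{\pm} x^{i}y^{j}$ directly into the Melnikov formula \eqref{Melnk1} and then reduce each monomial contribution either to a vanishing term or to a piece of the form $(\text{polynomial in }x,h)\cdot y\,dx$. A crucial preliminary remark is that system \eqref{Ham1}, being refracted, must satisfy $Z^{+}f=Z^{-}f$ on $\Sigma=\{x=0\}$ with $f=x$; since $H_{y}^{\pm}=2y$ this collapses to $p^{+}(0,y)=p^{-}(0,y)$, i.e.\ $a_{0j}^{+}=a_{0j}^{-}$ for every $j$. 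I will invoke this identity at the very end to kill the only non-polynomial boundary terms produced by the reduction.

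Next I would split each perturbation into even and odd parts in $y$, writing $p^{\pm}=p_{e}^{\pm}(x,y^{2})+y\,p_{o}^{\pm}(x,y^{2})$ and similarly for $q^{\pm}$. On the level curve $\Gamma_{h}^{\pm}$ the substitution $y^{2}=h-\Psi_{l}(x)$ (with $l=1$ for the right arc and $l=2$ for the left) turns these into $x$-polynomial coefficients $P_{e/o}^{\pm}(x,h)$, $Q_{e/o}^{\pm}(x,h)$. Three standard reductions then do the bookkeeping: (i) any 1-form $R(x,h)\,dx$ with no $y$-dependence integrates to zero along $\Gamma_{h}^{\pm}$, because the arc leaves $x=0$, reaches an extremum, and returns; (ii) the level-curve identity $y\,dy=-\tfrac{1}{2}\Psi_{l}'(x)\,dx$ makes both $yP_{o}^{\pm}\,dy$ and all $y^{2k+1}\,dy$-type expressions into pure $x$-forms of case (i); (iii) integration by parts gives $P_{e}^{\pm}\,dy = d(P_{e}^{\pm}\,y) - y\,(d/dx)P_{e}^{\pm}\,dx$. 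Applying these to $q^{\pm}dx-p^{\pm}dy$ and summing the two arc integrals yields
\[
I(h)=\int_{\Gamma_{h}^{+}}\Bigl(Q_{o}^{+}+\tfrac{d}{dx}P_{e}^{+}\Bigr)y\,dx+\int_{\Gamma_{h}^{-}}\Bigl(Q_{o}^{-}+\tfrac{d}{dx}P_{e}^{-}\Bigr)y\,dx+2\sqrt{h}\bigl(p_{e}^{+}(0,h)-p_{e}^{-}(0,h)\bigr).
\]
The boundary $\sqrt{h}$-term arises from $[P_{e}^{\pm}y]$ evaluated at $A=(0,\sqrt{h})$ and $B=(0,-\sqrt{h})$; since $p_{e}^{\pm}(0,h)=\sum_{k}a_{0,2k}^{\pm}h^{k}$, the refracted identity $a_{0,2k}^{+}=a_{0,2k}^{-}$ forces it to vanish identically, leaving $I(h)$ in exactly the advertised shape.

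Finally I would read off the degrees of the coefficient polynomials by maximizing $i+k\,m_{l}$ over admissible $(i,k)$: the two contributing atoms are $x^{i}(h-\Psi_{l})^{k}$ from $Q_{o}$ (constraint $i+2k+1\leq n$) and $(d/dx)\bigl[x^{i}(h-\Psi_{l})^{k}\bigr]$ from the $P_{e}$ piece (constraint $i+2k\leq n$, with the derivative lowering the $x$-degree by one). A parity analysis then produces $k_{i}=m_{i}(n-1)/2$ for $n$ odd and $k_{i}=m_{i}(n-2)/2+1$ for $n$ even; the boundary cases $n=1$ and $n=2$ fall out of the same enumeration, with $n=2$ giving purely linear-in-$x$ coefficients because the only surviving atom is $(h-\Psi_{l})$ whose $x$-derivative $-\Psi_{l}'$ is independent of $h$. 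The step I expect to be the main obstacle is precisely this last degree count, since for even $n$ the dominant contribution can switch between the $Q_{o}$ atom and the differentiated $P_{e}$ atom and one must carefully match the combinatorics of $i+2k\leq n$ against the weight $m_{l}=\deg\Psi_{l}$ to land on the formula stated in the lemma.
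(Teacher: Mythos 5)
Your reduction follows essentially the same route as the paper's proof: on each arc the relation $y^{2}=h-\Psi_{l}(x)$ turns every monomial of the perturbation into either an exact $x$-form (which integrates to zero over the arc) or a term of the form $(\text{polynomial in }x,h)\,y\,dx$. In one respect you are more careful than the paper: the paper reduces $\int_{\Gamma_{h}^{\pm}}x^{i}y^{j}\,dy$ by Green's theorem only for $i\geq 1$ and silently discards the $i=0$ contributions $\int_{\Gamma_{h}^{\pm}}y^{2l}\,dy=\mp\tfrac{2}{2l+1}h^{l+1/2}$, which do not vanish individually; you correctly isolate the resulting $\sqrt{h}$-terms and kill them with the refraction identity $a_{0j}^{+}=a_{0j}^{-}$.

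The gap sits exactly at the step you flag as the main obstacle: for even $n$ the degree count does not come out of your decomposition as stated. The atom $i=0$, $k=n/2$ of $P_{e}^{\pm}$, namely $a_{0,n}^{\pm}(h-\Psi_{l})^{n/2}$, survives your integration by parts as $\int_{\Gamma_{h}^{\pm}}\tfrac{d}{dx}\bigl[(h-\Psi_{l})^{n/2}\bigr]\,y\,dx$, whose integrand has $x$-degree $\tfrac{n}{2}m_{l}-1$; this exceeds the claimed $k_{l}=\tfrac{m_{l}(n-2)}{2}+1$ whenever $m_{l}\geq 3$ (the two coincide only for $m_{l}=2$). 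So your final representation of $I(h)$ is valid, but its coefficient polynomials are of higher degree than the lemma asserts. To close the gap, observe that this integrand is not genuinely new: since $\Psi_{l}'\,y\,dx=-2y^{2}\,dy$ on the level curve, the integral equals a fixed nonzero multiple of $h^{(n+1)/2}$ with opposite signs on the two arcs, so the two contributions cancel by the very same identity $a_{0,n}^{+}=a_{0,n}^{-}$ that you already invoked for the boundary term. Equivalently --- and this is what the paper's restriction to $i\geq 1$ achieves implicitly --- the $x$-independent even part of $p^{\pm}$ should never be integrated by parts: $\int_{\Gamma_{h}^{\pm}}y^{2k}\,dy$ is an elementary endpoint evaluation and belongs with the boundary terms. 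With that single amendment your degree count matches the statement.
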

\begin{proof}
Let us assume that the clockwise closed orbit $ \Gamma_{h}^{+}\cup \Gamma_{h}^{-} $ intersects the $y$-axis at two distinct points $A$ 
and $B$, and let $D$ denote the region bounded by $\Gamma^{+}_{h}\cup \overrightarrow{BA} $. Using Green's theorem
and the fact that $\int_{\overrightarrow{BA}}x^{i-1}y^{j+1}dx=0$ for $i\geq 1$, it follows that
\begin{equation*}
\int_{\Gamma^{+}_{h}}x^{i-1}y^{j+1}dx=\oint_{\Gamma^{+}_{h}\cup \overrightarrow{BA} 
}x^{i-1}y^{j+1}dx=(j+1)\int\int_{D}x^{i-1}y^{j}dxdy,
\end{equation*}
and 
\begin{equation*}
\int_{\Gamma^{+}_{h}}x^{i}y^{j}dy=\oint_{\Gamma^{+}_{h}
\cup \overrightarrow{BA}}x^{i}y^{j}dy=
-i\int\int_{D}x^{i-1}y^{j}dxdy.
\end{equation*}
Hence, for $i\geq 1$, we get that
\begin{equation}\label{I1}
\int_{\Gamma^{+}_{h}}x^{i}y^{j}dy=-\frac{i}
{j+1}\int_{\Gamma^{+}_{h}}x^{i-1}y^{j+1}dx.
\end{equation}
Similarly, for $i\geq 1$, we can obtain that
\begin{equation}\label{I2}
\int_{\Gamma^{-}_{h}}x^{i}y^{j}dy=-\frac{i}
{j+1}\int_{\Gamma^{-}_{h}}x^{i-1}y^{j+1}dx.
\end{equation}
Then, using \eqref{I1} and \eqref{I2}, the line integral \eqref{Melnk1} can be written as
\begin{equation*}
\begin{aligned}
I(h) =&
\int_{\Gamma_{h}^{+}}\sum_{i+j=0}^{n}b^{+}_{ij}x^{i}y^{j}dx
-\int_{\Gamma_{h}^{+}}\sum_{i+j=0}^{n}a^{+}_{ij}x^{i}y^{j}dy+\int_{\Gamma_{h}^{-}}\sum_{i+j=0}^{n}b^{-}_{ij}x^{i}y^{j}dx
-\int_{\Gamma_{h}^{-}}\sum_{i+j=0}^{n}a^{-}_{ij}x^{i}y^{j}dy\\
=&\sum_{i+j=0}^{n}b^{+}_{ij}\int_{\Gamma_{h}^{+}}x^{i}y^{j}dx
+\sum_{i+j=0}^{n}\frac{i}{j+1}a^{+}_{ij}\int_{\Gamma^{+}_{h}}x^{i-1}y^{j+1}dx\\
&+\sum_{i+j=0}^{n}b^{-}_{ij}\int_{\Gamma_{h}^{-}}x^{i}y^{j}dx
+\sum_{i+j=0}^{n}\frac{i}{j+1}a^{-}_{ij}\int_{\Gamma^{-}_{h}}x^{i-1}y^{j+1}dx\\
=&\sum_{i+j=0,i\geq 0, j\geq 1}^{n}\tilde{a}_{ij} I_{ij}(h)
+\sum_{i+j=0,i\geq 0, j\geq 1}^{n}\tilde{b}_{ij} J_{ij}(h),
\end{aligned}
\end{equation*}
where
\begin{equation*}
\begin{aligned}
&
\tilde{a}_{ij}=b^{+}_{ij}+\frac{i+1}{j}a^{+}_{(i+1)(j-1)},\\
&\tilde{b}_{ij}=b^{-}_{ij}+\frac{i+1}{j}a^{-}_{(i+1)(j-1)},\\
&
I_{ij}(h)=\int_{\Gamma_{h}^{+}}x^{i}y^{j}dx,\qquad
J_{ij}(h)=\int_{\Gamma_{h}^{-}}x^{i}y^{j}dx.
\end{aligned}
\end{equation*}
On the other hand, considering the orbital arc $\Gamma_{h}^{+}$, we can observe that the equation $y^{2}=h-\Psi_{1}(x)$ holds. Therefore, for any value of 
$l\geq0$, we can deduce that
\begin{equation*}
\int_{\Gamma_{h}^{+}}x^{i}y^{j}dx=\oint_{\Gamma_{h}^{+}\cup \overrightarrow{BA}}x^{i}y^{j}dx=
\begin{cases}
0,& j=2l,\\
\displaystyle\int_{\Gamma_{h}^{+}}x^{i}\left( h-\Psi_{1}(x)\right)^{l} ydx, & j=2l+1,
\end{cases}
\end{equation*}
similarly, along the orbital arc $\Gamma_{h}^{-}$, we can see that the equation $y^{2}=h-\Psi_{2}(x)$ holds. Consequently,
for $l\geq0$, we can conclude that
\begin{equation*}
\int_{\Gamma_{h}^{-}}x^{i}y^{j}dx=\oint_{\Gamma_{h}^{-}\cup \overrightarrow{AB}}x^{i}y^{j}dx=
\begin{cases}
0,& j=2l,\\
\displaystyle\int_{\Gamma_{h}^{-}}x^{i}\left( h-\Psi_{2}(x)\right)^{l} ydx, & j=2l+1.
\end{cases}
\end{equation*}
Thus, using the above information, the statements of the lemma immediately follow.
\end{proof}
\begin{figure}[]
\centering
\includegraphics[scale=.85]{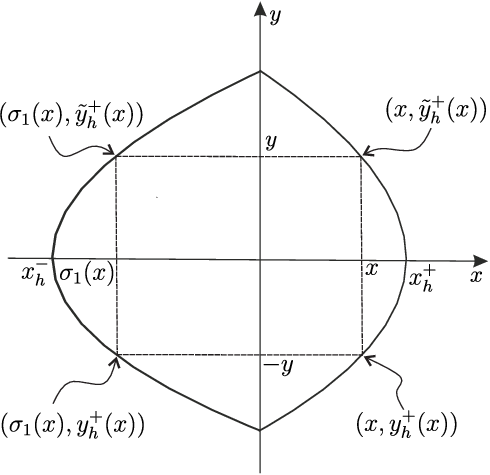}
\caption{Notation related to piecewise smooth oval around the origin. \label{fig2}}
\end{figure}

We often consider that $p^{\pm}(x,y)$ and $q^{\pm}(x,y)$ belong to a specific function space. For instance, 
we can consider $p^{\pm}(x,y)=y\,p_{\pm}(x)$ and $q^{\pm}(x,y)=y\,q_{\pm}(x)$, which can be rescaled to transform 
integral \eqref{Melnk2} into the following form
\begin{equation}\label{LC}
a^{+}_{0}I_{0}(h)+a^{-}_{0}J_{0}(h)+a^{+}_{1}I_{1}(h)+a^{-}_{1}J_{1}(h)+\ldots+a^{+}_{n-1}I_{n-1}(h)
+a^{-}_{n-1}J_{n-1}(h),
\end{equation}
where the constants $a^{\pm}_0,a^{\pm}_1,\ldots,a^{\pm}_{n-1}$ depend on the initial parameters, and
\begin{equation*}
I_{i}(h)=\int_{\Gamma_{h}^{+}}x^{i} y dx,\quad J_{i}(h)=\int_{\Gamma_{h}^{-}}x^{i} y dx,\qquad i=0,1,\ldots, n-1.
\end{equation*}

The set of functions $\left\lbrace I_{0}(h),J_{0}(h),I_{1}(h),J_{1}(h)\ldots,I_{n-1}(h),J_{n-1}(h)\right\rbrace $ may not always be 
linearly independent. Nevertheless, it is possible to identify the maximal subset of these functions that is linearly independent by applying 
linear algebra techniques. The maximal subset of linearly independent functions that we can obtain from the family of functions 
\begin{equation*}
\left\lbrace I_{0}(h),J_{0}(h),I_{1}(h),J_{1}(h)\ldots,I_{n-1}(h),J_{n-1}(h)\right\rbrace,
\end{equation*}
will consist of $I_{0}(h),I_{1}(h),\ldots,I_{n-1}(h)$ and 
some or all of the functions $J_{i}(h)$ that are linearly independent of the functions $I_{0}(h),I_{1}(h),\ldots,I_{n-1}(h)$ and the
previously chosen $J_{j}(h)$, for $j < i$.

Let $m$ be the number of functions in this subset other than the $I_{i}$ functions. Then, the subset can be written as
\begin{equation*}
\left\lbrace I_{0}(h),I_{1}(h),\ldots,I_{n-1}(h),J_{i_1}(h),J_{i_2}(h),\ldots,J_{i_m}(h)\right\rbrace,
\end{equation*} 
where $i_1 < i_2 < \cdots < i_m$ are the indices of the $J$ functions that are included in the subset.
Note that $m$ can be any integer between $0$ and $n$. If none of the $J_{i}$ functions are linearly independent of the $I_{i}$ functions,
then $m=0$ and the subset consists only of the $I_{i}$ functions. If all of the $J_{i}$ functions are linearly independent of the 
$I_{i}$ functions, then $m=n$ and the subset consists of all $I_{i}$ functions and all $J_{i}$ functions. In what follows, along the curve $ \Gamma_{h} $, we consider 
that the branches of $ \Gamma^{\pm}_{h} $ are defined by
\begin{equation*}
\begin{aligned}
\tilde{y}_{h}^{+}(x)&:=\sqrt{h-\Psi_{1}(x)},\qquad y_{h}^{+}(x):=-\sqrt{h-\Psi_{1}(x)},\\
\tilde{y}_{h}^{-}(x)&:=\sqrt{h-\Psi_{2}(x)},\qquad y_{h}^{-}(x):=-\sqrt{h-\Psi_{2}(x)},
\end{aligned}
\end{equation*}
for each $h\in (0,h_{0})$. We also note that $\tilde{y}_{h}^{+}(x)=\tilde{y}_{h}^{-}(\sigma_{1}(x))$ and 
$y_{h}^{+}(x)=y_{h}^{-}(\sigma_{1}(x))$ for $x\in (0,x_{r})$ (see Figure \ref{fig2}). 
\begin{proof}[Proof of Theorem \ref{the1}]
Now we will prove the first theorem of the paper.
We will compute the derivatives of both $I(h)$ and $J(h)$. These derivatives are essential to calculate the Wronskian of the family 
of functions.
\begin{lemma}\label{lem1}
Assume that $f_1$ and $f_2$ are analytic functions on the intervals $(0, x_r)$ and $(x_l, 0)$ respectively, and let us consider 
\begin{equation*}
I(h)=\int_{\Gamma_{h}^{+}}f_{1}(x)y^{2s-1} dx,\quad J(h)=\int_{\Gamma_{h}^{-}}f_{2}(x)y^{2s-1} dx,
\end{equation*}
then if $s>2(n-1)$, then we get that
\begin{equation*}
I^{(k)}(h)=c_{k}\int_{0}^{x_{h}^{+}}f_{1}(x) \tilde{y}_{h}^{+}(x)^{2(s-k)-1} dx, \qquad k=0,1,\ldots,2n-1,
\end{equation*}
and 
\begin{equation*}
J^{(k)}(h)=-c_{k}\int_{0}^{x_{h}^{+}}f_{2}(\sigma_{1}(x))\sigma_{1}^{\prime}(x) \tilde{y}_{h}^{+}(x)^{2(s-k)-1} dx, \qquad 
k=0,1,\ldots,2n-1,
\end{equation*}
where $c_{k}:=2(2s-1)(2s-3)\ldots(2(s-k)+1)$.
\end{lemma}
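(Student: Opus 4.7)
The plan is to reduce the two line integrals on $\Gamma_h^{\pm}$ to ordinary integrals on $(0,x_h^+)$, and then to differentiate $k$ times under the integral sign, verifying that all boundary contributions vanish.

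\emph{Reduction step.} I would parametrize $\Gamma_h^+$ by its two branches $y=\pm\tilde{y}_h^+(x)$ with $x\in[0,x_h^+]$. Since the orbit is traversed clockwise and $y^{2s-1}$ is an odd function of $y$, the contributions of the upper and lower branches combine to give $I(h)=2\int_0^{x_h^+}f_1(x)\tilde{y}_h^+(x)^{2s-1}\,dx$, which is the $k=0$ case with $c_0=2$. For $J(h)$, the same splitting yields $J(h)=2\int_{x_h^-}^{0}f_2(x)\tilde{y}_h^-(x)^{2s-1}\,dx$. I would then perform the substitution $x=\sigma_1(u)$ with $u\in(0,x_h^+)$, using the defining identity $\Psi_2(\sigma_1(u))=\Psi_1(u)$ to rewrite $\tilde{y}_h^-(\sigma_1(u))=\tilde{y}_h^+(u)$. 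Because $\sigma_1$ is orientation-reversing (it sends $0$ to $0$ and $x_h^+$ to $x_h^-$), the substitution flips the limits and contributes the factor $\sigma_1'(u)$, producing the claimed formula $J(h)=-2\int_0^{x_h^+}f_2(\sigma_1(u))\sigma_1'(u)\tilde{y}_h^+(u)^{2s-1}\,du$.

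\emph{Induction on $k$.} Writing $\tilde{y}_h^+(x)^{2(s-k)-1}=(h-\Psi_1(x))^{(2(s-k)-1)/2}$, I would prove both formulas simultaneously by induction on $k$, applying Leibniz's rule for an integral with a variable upper limit. In the inductive step, the $h$-derivative of the integrand lowers the exponent by one and brings out a multiplicative factor proportional to $(2(s-k)-1)$, while the boundary contribution at $x=x_h^+$ is proportional to $(h-\Psi_1(x_h^+))^{(2(s-k)-1)/2}$, which vanishes whenever this exponent is strictly positive. The same computation applies verbatim to $J(h)$, since the only $h$-dependent piece of its integrand is $\tilde{y}_h^+(x)^{2(s-k)-1}$. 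The explicit constant $c_k$ accumulates through the induction.

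\emph{Boundary vanishing and the hypothesis.} The main point of the argument is to ensure that the boundary terms vanish at every one of the $2n-1$ successive differentiations. At the step $k\mapsto k+1$ we need $(2(s-k)-1)/2>0$, i.e.\ $s>k+1/2$; taking the worst case $k=2n-2$ gives $s>2n-\tfrac{3}{2}$, which for integer $s$ is precisely the stated hypothesis $s>2(n-1)$. The same inequality guarantees that the final integrand $\tilde{y}_h^+(x)^{2(s-(2n-1))-1}\sim C(x_h^+-x)^{(2s-4n+1)/2}$ near the right endpoint remains integrable. I do not anticipate difficulties beyond this bookkeeping and the careful handling of the orientation-reversing substitution used for $J(h)$.
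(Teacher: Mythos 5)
Your proposal is correct and follows essentially the same route as the paper: the same reduction of $I$ and $J$ to single integrals over $(0,x_h^+)$ via the two-branch splitting and the substitution $x=\sigma_1(u)$, followed by the same induction using Leibniz's rule with the boundary term killed by $\tilde{y}_h^+(x_h^+)=0$. Your accounting of exactly where the hypothesis $s>2(n-1)$ is needed is a slightly more explicit version of what the paper leaves implicit, but the argument is the same.
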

\begin{proof}
To prove the above lemma by induction on $k$, we first show that the statement is true for $k=0$. Let us assume that the non-smooth 
oval $\Gamma_{h}^{+}\cup \Gamma_{h}^{-}$ has a clockwise orientation. By using the fact that 
$y_{h}^{\pm}(x)=-\tilde{y}_{h}^{\pm}(x)$, we can obtain that
\begin{equation*}
\begin{aligned}
I(h)=&\; \int_{0}^{x_{h}^{+}}f_{1}(x){\tilde{y}_{h}^{+}}(x)^{2s-1} dx+\int_{x_{h}^{+}}^{0}f_{1}(x)y_{h}^{+}(x)^{2s-1} dx\\
=&\;2 \int_{0}^{x_{h}^{+}}f_{1}(x){\tilde{y}_{h}^{+}}(x)^{2s-1}dx,
\end{aligned}
\end{equation*}
and 
\begin{equation*}
\begin{aligned}
J(h)=&\;\int_{0}^{x_{h}^{-}}f_{2}(x)y_{h}^{-}(x)^{2s-1} dx+\int_{x_{h}^{-}}^{0}f_{2}(x)\tilde{y}_{h}^{-}(x)^{2s-1} dx\\
=&\; 2\int_{x_{h}^{-}}^{0}f_{2}(x)\tilde{y}_{h}^{-}(x)^{2s-1} dx\\
=&\; 2\int_{x_{h}^{+}}^{0}f_{2}(\sigma_{1}(u))\sigma_{1}^{\prime}(u)y^{2s-1}
\Big{\vert}_{\tilde{y}_{h}^{-}(\sigma_{1}(u))} du,
\end{aligned}
\end{equation*}
where in the last equality we applied the change of variable $x = \sigma_{1}(u)$. Also we have 
$\tilde{y}_{h}^{-}(\sigma_{1}(u))=\tilde{y}_{h}^{+}(u)$, and the above expression leads to
\begin{equation*}
J(h)=-2\int_{0}^{x_{h}^{+}}f_{2}(\sigma_{1}(x))\sigma_{1}^{\prime}(x){\tilde{y}_{h}^{+}(x)}^{2s-1} dx.
\end{equation*}
So the proof holds for $k=0$. Assuming that the proof holds for values of $k < 2n-1$, we can now use the fact that $s>2(n-1)$ to obtain that
\begin{equation*}
\begin{aligned}
I^{(k+1)}(h)=&\dfrac{d}{dh}\int_{0}^{x_{h}^{+}}c_{k}f_{1}(x) \tilde{y}_{h}^{+}(x)^{2(s-k)-1} dx\\
=&c_{k}f_{1}(x_{h}^{+}) \tilde{y}_{h}^{+}(x_{h}^{+})^{2(s-k)-1}\dfrac{dx_{h}^{+}}{dh}
+\int_{0}^{x_{h}^{+}}c_{k}f_{1}(x) \dfrac{d\tilde{y}_{h}^{+}(x)^{2(s-k)-1}}{dh} dx\\
=&\int_{0}^{x_{h}^{+}}c_{k}\left( 2(s-k)-1\right) f_{1}(x) \tilde{y}_{h}^{+}(x)^{2(s-k)-2} dx\\
=&\int_{0}^{x_{h}^{+}}c_{k+1}f_{1}(x) \tilde{y}_{h}^{+}(x)^{2(s-(k+1))-1} dx.
\end{aligned}
\end{equation*}
We note that in the second equality we use the fact that $ \tilde{y}_{h}^{+}(x)$ is equal to zero at $x=x_{h}^{+}$.
Therefore, the statement is true for $k+1$ as well. 

Since the proof of the derivative of $J(h)$ up to $2n-1$ is similar to the derivative of $I(h)$, we will not provide it here. 
Thus, we have completed the proof of the lemma.
\end{proof}

Let's now consider the line integrals of the form
\begin{equation*}
I_{i}(h)=\int_{\Gamma_{h}^{+}}f_{1i}(x)y^{2s-1} dx,\quad
J_{i}(h)=\int_{\Gamma_{h}^{-}}f_{2i}(x)y^{2s-1} dx,
\end{equation*}
where $f_{1i}$ and $f_{2i}$ are analytic functions on $(0, x_r)$ and $(x_l, 0)$, respectively. To simplify our approach, we rename this 
family of functions as 
\begin{equation*} 
L_i (h):=
\begin{cases}
I_\frac{i}{2}(h), & i = 0, 2, \ldots, 2n-2, \\ 
J_\frac{i-1}{2}(h), & i = 1, 3, \ldots, 2n-1,
\end{cases} 
\end{equation*} 
and, using Lemma \ref{lem1}, the derivatives of the functions $L_i (h)$ can be given as
\begin{equation*} 
L^{(k)}_i (h)=c_{k}\int_{0}^{x_{h}^{+}}l_{i}(x) \tilde{y}_{h}^{+}(x)^{2(s-k)-1},\quad i=0,1,\ldots, 2n-1,
\end{equation*} 
where $ k=0,1,\ldots,2n-1 $, and 
\begin{equation*} 
l_i (x)=
\begin{cases} 
f_{1\left( \frac{i}{2}\right) }(x), & i = 0, 2, \ldots, 2n-2, \\ 
-f_{2\left(\frac{i-1}{2}\right)}(\sigma_{1}(x))\sigma_{1}^{\prime}(x), & i = 1, 3, \ldots, 2n-1.
\end{cases} 
\end{equation*} 
\begin{proposition}\label{pr1}
Suppose that $s > 2(n-1)$. Then, for each $k = 1, 2, \ldots, 2n$, the Wronskian of $(L_0,L_1,\ldots,L_{k-1})$ 
at $h \in (0, h_0)$ is given by
\begin{equation*}
W[\bold{L}_\bold{k}](h)
=m_{k-1} \int_{0}^{x^{+}_{h}}\ldots\int_{0}^{x^{+}_{h}} D[\bold{l}_{\bold{k}}](\bold{x_{k}})\prod_{i=0}^{k-1}
y_{i}^{2(s-i)-1}dx_{0}dx_{1}\ldots dx_{k-1},
\end{equation*}
where $y_{i}=\tilde{y}_{h}^{+}(x_{i})$ and $m_{k-1}=\prod_{i=0}^{k-1}c_{i}$.
\end{proposition}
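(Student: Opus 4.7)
The plan is to expand the Wronskian determinant using Lemma \ref{lem1} and then convert the resulting determinant of integrals into a single $k$-fold integral of a discrete Wronskian by repeated application of the multilinearity of the determinant.

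First, since $s > 2(n-1)$, Lemma \ref{lem1} applies and guarantees that for every $j=0,1,\ldots,k-1 \le 2n-1$ the derivative has the closed form
\begin{equation*}
L_i^{(j)}(h) = c_j \int_{0}^{x_h^+} l_i(x)\,\tilde{y}_h^+(x)^{2(s-j)-1}\,dx.
\end{equation*}
Plugging these expressions into the $k\times k$ Wronskian matrix and pulling the scalar $c_j$ out of row $j$ produces the common factor $m_{k-1} = \prod_{j=0}^{k-1} c_j$ in front of the determinant.

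Next, I would rename the dummy integration variable in row $j$ as $x_j$ (this is legal since the bound $x_h^+$ is the same for every row). By the multilinearity of the determinant in its rows, applied successively $k$ times, the determinant of the resulting matrix of integrals equals the $k$-fold iterated integral, over $(0,x_h^+)^k$, of the determinant of the matrix with $(j,i)$-entry $l_i(x_j)\,\tilde{y}_h^+(x_j)^{2(s-j)-1}$. Now the factor $\tilde{y}_h^+(x_j)^{2(s-j)-1}$ depends only on the row index $j$ (through $x_j$), so it may be pulled out of row $j$, producing the product $\prod_{i=0}^{k-1} \tilde{y}_h^+(x_i)^{2(s-i)-1}$ outside the determinant. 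The leftover determinant $\det[l_i(x_j)]_{j,i=0}^{k-1}$ is, by definition, the discrete Wronskian $D[\bold{l_k}](\bold{x_k})$. Substituting $y_i = \tilde{y}_h^+(x_i)$ gives the claimed formula.

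There is no real obstacle: the argument is a bookkeeping exercise. The only point where one must be careful is the multilinearity step, which is only clean because all rows share the same integration interval $(0,x_h^+)$ and because the assumption $s > 2(n-1)$ ensures that every integrand (including the highest derivative row, where the exponent $2(s-(2n-1))-1$ must still be at least $1$ up to applicable cancellation) is integrable and the differentiations performed in Lemma \ref{lem1} are valid — so no boundary term from the moving endpoint $x_h^+$ ever contaminates the formula.
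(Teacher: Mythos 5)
Your proposal is correct and follows essentially the same route as the paper: both substitute the derivative formulas from Lemma \ref{lem1} into the $k\times k$ Wronskian matrix, extract the constants $c_i$ to form $m_{k-1}$, and convert the determinant of integrals over the common domain $(0,x_h^+)$ into a single $k$-fold integral of the discrete Wronskian (the paper does this by expanding via the Leibniz permutation sum and interchanging sum and integral, which is exactly your row-multilinearity step). No gap to report.
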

\begin{proof}
To calculate the Wronskian of the function family $(L_0,L_1,\ldots,L_{k-1})$, where $k$ varies from $1$ to $2n$, we use the Leibniz formula, which is defined as follows:
\begin{equation*}
\begin{aligned}
W[\bold{L}_\bold{k}](h)=&det \left( L_{j}^{(i)}(h)\right) _{0\leq i,j \leq k-1}= \sum_{\pi \in S_{k}} \operatorname{sgn}(\pi)
\prod_{i=0}^{k-1}L^{(i)}_{\pi(i)}(h)\\
=& \sum_{\pi \in S_{k}} \operatorname{sgn}(\pi)\prod_{i=0}^{k-1}c_{i}\int_{0}^{x_{h}^+}l_{\pi(i)}(x)
\tilde{y}_{h}^{+}(x)^{2(s-i)-1}
dx\\=& \sum_{\pi \in S_{k}} \operatorname{sgn}(\pi)
\left( \prod_{i=0}^{k-1}c_{i}\right) \int_{0}^{x_{h}^+}\ldots \int_{0}^{x_{h}^+}l_{\pi(i)}(x_{i})\prod_{i=0}^{k-1}
\tilde{y}_{h}^{+}(x)^{2(s-i)-1}(x_{i})dx_{0}\ldots dx_{k-1}\\
=& \prod_{i=0}^{k-1}c_{i}
\int_{0}^{x_{h}^+}\ldots \int_{0}^{x_{h}^+}\sum_{\pi \in S_{k}} \operatorname{sgn}(\pi)\prod_{i=0}^{k-1}
l_{\pi(i)}(x_{i})\prod_{i=0}^{k-1}\tilde{y}_{h}^{+}(x)^{2(s-i)-1}(x_{i})dx_{0}\ldots dx_{k-1}\\
=& \prod_{i=0}^{k-1}c_{i}
\int_{0}^{x_{h}^+}\ldots \int_{0}^{x_{h}^+}D[\bold{l}_{\bold{k}}](\bold{x_{k}})\prod_{i=0}^{k-1}\tilde{y}_{h}^{+}(x)^{2(s-i)-1}
(x_{i})dx_{0}\ldots dx_{k-1},
\end{aligned}
\end{equation*}
where $S_{k}$ is the set of all permutations of $0, 1, ..., k-1$, and $\operatorname{sgn}(\pi)$ 
is the sign of the permutation $\pi$. The proof of the lemma is now complete.
\end{proof}
From Proposition \ref{pr1}, we get that 
\begin{equation*}
W[\bold{L}_\bold{k}](h)
=m_{k-1} \int\ldots\int_{\bigtriangleup_{k}(h)} D[\bold{l}_{\bold{k}}](\bold{x_{k}})\prod_{i=0}^{k-1}y_{i}^{2(s-i)-1}dx_{0}dx_{1}\ldots dx_{k-1},
\end{equation*}
where $m_{k-1}\neq 0$, and by assumption the family of functions $(l_{0},l_{1},\ldots,l_{2n-1})$ is a CT-system on $(0,x_{r})$, and it implies that 
$ W[\bold{L}_\bold{k}](h)\neq 0 $. Therefore, Theorem \ref{the1}
was proved for $\chi_{1}(x)=\chi_{2}(x)=1$. 

Since the functions $\chi_1(x)$ and $\chi_2(x)$ take positive values in the intervals $(0,x_r)$ and $(x_l,0)$, respectively, and 
$\lim_{x\to 0^+} \chi_1(x) = \lim_{x\to 0^-} \chi_2(x) > 0$, we can define a new coordinate system 
$(u,v) = \varphi_i(x,y) = (x, \sqrt{\chi_i(x)} y)$, for $i=1,2$. Using this transformation, we obtain 
\begin{equation*}
\gamma^{+}_{h}:=\varphi^{-1}_{1}(\Gamma^{+}_{h})\subset\lbrace v^{2}+\Psi_{1}(u)=h, u>0\rbrace,\qquad
\gamma^{-}_{h}:=\varphi^{-1}_{2}(\Gamma^{-}_{h})\subset\lbrace v^{2}+\Psi_{2}(u)=h, u>0\rbrace.
\end{equation*}
and the integrals $I_i(h)$ and $J_i(h)$ can be expressed as
\begin{equation*}
I_{i}(h)=\int_{\Gamma_{h}^{+}}f_{1i}(x)y^{2s-1} dx=\int_{\gamma_{h}^{+}}\left( \dfrac{f_{1i}}{\chi_{1}^{\frac{2s-1}{2}}}\right) 
(u)v^{2s-1} du,
\end{equation*}
and 
\begin{equation*}
J_{i}(h)=\int_{\Gamma_{h}^{-}}f_{2i}(x)y^{2s-1} dx=\int_{\gamma_{h}^{-}}\left( \dfrac{f_{2i}}{\chi_{2}^{\frac{2s-1}{2}}}\right) 
(u)v^{2s-1} du.
\end{equation*}
If we use the same notation as in the case $\chi_1(x) = \chi_2(x) = 1$, that is, 
\begin{equation*}
f_{1i}:=\dfrac{f_{1i}}{\chi_{1}^{\frac{2s-1}{2}}}, \quad f_{2i}:= \dfrac{f_{2i}}{\chi_{2}^{\frac{2s-1}{2}}}\quad\text{and}\quad
(x,y):=(u,v),
\end{equation*}
then the proof of Theorem \ref{the1} is complete.
\end{proof}
\begin{proof}[Proof of Theorem \ref{the2}]
In order to compute the Wronskian of the family of functions in Theorem \ref{the2}, we must take the derivative of $I(h)$ up to 
$n-1$. Consequently, we only need to consider $s>n-2$, and the derivative of $I(h)$ will be identical to that in Lemma \ref{lem1} as
\begin{equation*}
I^{(k)}(h)=c_{k}\int_{0}^{x_{h}^{+}}f(x) \tilde{y}_{h}^{+}(x)^{2(s-k)-1} dx, \qquad k=0,1,\ldots,n-1.
\end{equation*}
\begin{proposition}\label{pro2}
Assume that $s>n-2$. Then, for each $k = 1, 2,\ldots, n$, the Wronskian of 
$(I_{0}, I_{1}, \dots, I_{k-1})$ at $h\in(0, h_{0})$ is given by 
\begin{equation*}
W[\bold{I}_{\bold{k}}](h)
=m_{k-1} \int_{0}^{x^{+}_{h}}\ldots\int_{0}^{x^{+}_{h}} D[\bold{l}_{\bold{k}}](\bold{x_{k}})\prod_{i=0}^{k-1}y_{i}^{2(s-i)-1}dx_{0} dx_{1}\ldots dx_{k-1},
\end{equation*}
where $y_{i}=\tilde{y}_{h}^{+}(x_{i})$, $m_{k-1}=\prod_{i=0}^{k-1}c_{i}$, $l_{i}(x):= 
f_{i}(x)$.
\end{proposition}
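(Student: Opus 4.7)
The plan is to follow the blueprint of Proposition \ref{pr1} almost line for line, since the single-arc setting of Theorem \ref{the2} is a strict simplification of the alternating setting of Theorem \ref{the1}.

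First I would establish the one-sided analog of Lemma \ref{lem1}: under the hypothesis $s > n - 2$, for each $j$ the integral $I_j(h) = \int_{\Gamma_h^+} f_j(x) y^{2s-1} dx$ rewrites as $2 \int_0^{x_h^+} f_j(x) \tilde{y}_h^+(x)^{2s-1} dx$ after splitting along the $y$-axis and using $y_h^+ = -\tilde{y}_h^+$. An induction on $k$, differentiating under the integral sign, then yields
\begin{equation*}
I_j^{(k)}(h) = c_k \int_0^{x_h^+} f_j(x)\, \tilde{y}_h^+(x)^{2(s-k)-1}\, dx, \qquad k = 0, 1, \ldots, n-1,
\end{equation*}
with $c_k = 2(2s-1)(2s-3) \cdots (2(s-k)+1)$, exactly as in Lemma \ref{lem1}. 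The boundary contribution at $x = x_h^+$ in each inductive step vanishes because $\tilde{y}_h^+(x_h^+) = 0$ with a positive exponent; since the last step advances $k$ from $n-2$ to $n-1$, this requires $2(s-(n-2))-1 > 0$, which is precisely the bound $s > n - 3/2$ guaranteed by $s > n - 2$.

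With the derivatives in hand, I apply the Leibniz expansion of the determinant
\begin{equation*}
W[\bold{I_k}](h) = \sum_{\pi \in S_k} \operatorname{sgn}(\pi) \prod_{i=0}^{k-1} I_{\pi(i)}^{(i)}(h),
\end{equation*}
pull out the constant $m_{k-1} = \prod_{i=0}^{k-1} c_i$, and rewrite the product of $k$ single integrals as a $k$-fold iterated integral over $[0, x_h^+]^k$ with dummy variables $x_0, \ldots, x_{k-1}$. Exchanging the finite sum with the multiple integral collects the alternating sum $\sum_{\pi} \operatorname{sgn}(\pi) \prod_i l_{\pi(i)}(x_i)$ into precisely the discrete Wronskian $D[\bold{l_k}](\bold{x_k})$ with $l_i(x) = f_i(x)$, producing the claimed identity.

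There is no genuine obstacle here; the computation is structurally identical to that of Proposition \ref{pr1}, with the simplification that no $\sigma$-substitution is needed because only the right-hand arc contributes. The only subtlety worth flagging is the sharpness of the exponent bound $s > n - 2$ discussed above. To extend from the reduced case $\chi_1 = 1$ to the general $\chi_1 > 0$ allowed in Theorem \ref{the2}, one applies the coordinate change $(u,v) = (x, \sqrt{\chi_1(x)}\, y)$ exactly as in the last step of the proof of Theorem \ref{the1}, which absorbs $\chi_1$ and replaces $f_i$ by $l_i = f_i / \chi_1^{(2s-1)/2}$, reducing the problem to the unit-$\chi_1$ case already treated.
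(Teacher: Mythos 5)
Your proposal is correct and follows essentially the same route as the paper: the derivative formula $I^{(k)}(h)=c_k\int_0^{x_h^+}f(x)\,\tilde{y}_h^+(x)^{2(s-k)-1}dx$ (the one-sided analogue of Lemma \ref{lem1}), then the Leibniz expansion of the determinant, conversion of the product of integrals into a $k$-fold integral, and exchange of the sum with the integral to produce $D[\bold{l}_{\bold{k}}]$; the $\chi_1$-reduction you mention is handled in the paper after the proposition, inside the proof of Theorem \ref{the2}. The only cosmetic remark is that your bound $s>n-3/2$ is implied by $s>n-2$ only because $s$ and $n$ are integers, which is indeed the setting here.
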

\begin{proof}
The Wronskian of the family of functions $(I_0, I_1,\dots, I_{k-1})$ at $h\in(0, h_{0})$, where $k=1,\dots, n-1$, is defined as
\begin{equation*}
\begin{aligned}
W[\bold{I}_{\bold{k}}](h)=&det \left( I_{j}^{(i)}(h)\right) _{0\leq i,j \leq k-1}= \sum_{\pi \in S_{k}} \operatorname{sgn}(\pi)
\prod_{i=0}^{k-1}I^{(i)}_{\pi(i)}(h)\\
=& \sum_{\pi \in S_{k}} \operatorname{sgn}(\pi)\prod_{i=0}^{k-1}c_{i}\int_{0}^{x_{h}^+}l_{\pi(i)}(x)\tilde{y}_{h}^{+}(x)^{2(s-i)-1}
dx\\=& \sum_{\pi \in S_{k}} \operatorname{sgn}(\pi)\left( \prod_{i=0}^{k-1}c_{i}\right) 
\int_{0}^{x_{h}^+}\ldots \int_{0}^{x_{h}^+}l_{\pi(i)}(x_{i})\prod_{i=0}^{k-1}\tilde{y}_{h}^{+}(x)^
{2(s-i)-1}(x_{i})dx_{0}\ldots dx_{k-1}\\
=& \prod_{i=0}^{k-1}c_{i}
\int_{0}^{x_{h}^+}\dots \int_{0}^{x_{h}^+}\sum_{\pi \in S_{k}} \operatorname{sgn}(\pi)\prod_{i=0}^{k-1}
l_{\pi(i)}(x_{i})\prod_{i=0}^{k-1}\tilde{y}_{h}^{+}(x)^{2(s-i)-1}(x_{i})dx_{0}\ldots dx_{k-1}\\
=& \prod_{i=0}^{k-1}c_{i}
\int_{0}^{x_{h}^+}\ldots \int_{0}^{x_{h}^+}D[\bold{l}_{\bold{k}}](\bold{x_{k}})\prod_{i=0}^{k-1}\tilde{y}_{h}^{+}(x)^{2(s-i)-1}
(x_{i})dx_{0}\ldots dx_{k-1}.
\end{aligned}
\end{equation*}
The set $S_{k}$ is defined as the set of all permutations of ${0, 1, ..., k-1}$. The function $\operatorname{sgn}(\pi)$ gives the sign of 
a permutation $\pi$. With this, we have completed the proof of the lemma.
\end{proof}
Now, from Proposition \ref{pro2}, it follows that
\begin{equation*}
W[\bold{I}_{\bold{k}}](h)
=m_{k-1} \int_{0}^{x^{+}_{h}}\ldots\int_{0}^{x^{+}_{h}} D[\bold{l}_{\bold{k}}](\bold{x_{k}})\prod_{i=0}^{k-1}y_{i}^{2(s-i)-1}dx_{0} dx_{1}\ldots dx_{k-1},
\end{equation*}
where $m_{k-1} \neq 0$ and, by assumption, the family of functions $(l_0, l_1, \dots, l_{n-1})$ is a CT-system on $(0, x_r)$. This 
implies that $W[\bold{I}_{\bold{k}}](h) \neq 0$. 
Therefore, Theorem \ref{the2} has been proved for $\chi_1(x) = \chi_2(x) = 1$. The proof for the 
general case is similar to that of Theorem \ref{the1}, we omit the details for brevity.
\end{proof}
\section{Applications}
In this section, we first begin by introducing a lemma that enables us to apply the main theorems of the paper when $s>2(n-1)$ and 
$s>n-2$ assumptions are not satisfied for Theorems \ref{the1} and \ref{the2}, respectively.
\begin{lemma}\label{lems}
Let $\Gamma_{h}^{+}\cup \Gamma_{h}^{-}$ be a non-smooth oval around the origin, and we consider a function $F$ 
such that $F/{\Psi_{i}}^{\prime}$ for $i=1,2$ are analytic functions at $x=0$. Then for any $k\in \N$,
\begin{equation*}
\begin{aligned}
\int_{\Gamma_{h}^{+}}F(x)y^{k-2}dx=\int_{\Gamma_{h}^{+}}G_{1}(x) y^{k}dx,\\
\int_{\Gamma_{h}^{-}}F(x)y^{k-2}dx=\int_{\Gamma_{h}^{-}}G_{2}(x) y^{k}dx,
\end{aligned}
\end{equation*}
where 
$G_{i}(x)=\displaystyle\frac{2}{k}\left( \dfrac{\chi_{i}F}{\Psi^{\prime}_{i}}\right)^{\prime}(x)- 
\left( \dfrac{\chi^{\prime}_{i}F}{\Psi^{\prime}_{i}}\right)(x),$
for $i=1,2$.
\end{lemma}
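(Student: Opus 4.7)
The plan is to exploit the fact that $H^{\pm}$ is constant along $\Gamma_h^{\pm}$. Differentiating $\chi_1(x)\, y^{2}+\Psi_1(x)=h$ along $\Gamma_h^{+}$ produces the one-form identity
\begin{equation*}
\Psi_1'(x)\, dx = -\chi_1'(x)\, y^{2}\, dx - 2\chi_1(x)\, y\, dy,
\end{equation*}
and the analogous identity holds on $\Gamma_h^{-}$ with $(\chi_2,\Psi_2)$ in place of $(\chi_1,\Psi_1)$. The strategy is to multiply $F(x)\, y^{k-2}\, dx$ by $\Psi_1'(x)/\Psi_1'(x)$ and substitute this relation, trading the exponent $k-2$ for $k$ at the cost of introducing a $y^{k-1}\, dy$ term that is then removed by integration by parts.

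Carrying this out, the substitution gives
\begin{equation*}
F(x)\, y^{k-2}\, dx = -\frac{F\chi_1'}{\Psi_1'}\, y^{k}\, dx - \frac{2F\chi_1}{\Psi_1'}\, y^{k-1}\, dy,
\end{equation*}
where the hypothesis that $F/\Psi_1'$ is analytic at $x=0$ is indispensable: it ensures that the coefficients stay regular even when $\Psi_1'$ vanishes at the origin (which happens precisely when $0$ is a critical point of $\Psi_1$). Now I would write $y^{k-1}\, dy = \tfrac{1}{k}\, d(y^{k})$ and integrate by parts along $\Gamma_h^{+}$:
\begin{equation*}
\int_{\Gamma_h^{+}} \frac{2F\chi_1}{\Psi_1'}\, y^{k-1}\, dy = \Big[\tfrac{2F\chi_1}{k\Psi_1'}\, y^{k}\Big]_{A}^{B} - \frac{2}{k}\int_{\Gamma_h^{+}} \Big(\frac{F\chi_1}{\Psi_1'}\Big)'\, y^{k}\, dx.
\end{equation*}
Collecting the $y^{k}\, dx$ contributions reproduces precisely
\begin{equation*}
G_1(x) = \frac{2}{k}\Big(\frac{\chi_1 F}{\Psi_1'}\Big)' - \frac{\chi_1' F}{\Psi_1'},
\end{equation*}
and the computation on $\Gamma_h^{-}$ is identical after replacing $(\chi_1,\Psi_1)$ by $(\chi_2,\Psi_2)$.

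The main obstacle, and the point that deserves the most care, is the boundary contribution in the integration by parts. At the turning point $x_h^{+}$ the factor $y^{k}$ vanishes automatically, so only the endpoints $A=(0,a(h))$ and $B=(0,b(h))$ on the $y$-axis require attention. To handle these I would split $\Gamma_h^{+}$ into its upper and lower branches, parameterized respectively by $\tilde{y}_h^{+}(x)$ and $-\tilde{y}_h^{+}(x)$, and evaluate each boundary term using the analytic extension of $F/\Psi_1'$ guaranteed by the hypothesis. The two branch contributions must then be combined so that what remains matches the stated $G_1$; this is exactly where the analyticity of $F/\Psi_1'$ at $0$ together with the $y$-symmetry of the oval conspire to make the identity clean. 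Once this verification is in place, the rest is routine bookkeeping, and the identity on $\Gamma_h^{-}$ follows by the identical argument.
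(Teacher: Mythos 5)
Your computation is, up to packaging, the same one the paper uses: the paper sets $g_1=\frac{2}{k}\,\chi_1F/\Psi_1'$, expands the exact form $d\bigl(g_1(x)y^k\bigr)$ along the level curve and integrates over the closed loop $\Gamma_h^{+}\cup\overrightarrow{BA}$, which is exactly your substitution of $\Psi_1'\,dx=-\chi_1'y^2\,dx-2\chi_1y\,dy$ followed by integration by parts, and your $G_1$ comes out right. The genuine gap is the one step you defer: the boundary term does \emph{not} vanish under the stated hypotheses. Since $\chi_1(0)a(h)^2=\chi_1(0)b(h)^2=h$ forces $b(h)=-a(h)$, you get
\begin{equation*}
\Bigl[\tfrac{2F\chi_1}{k\Psi_1'}\,y^{k}\Bigr]_{A}^{B}
=\frac{2\chi_1(0)}{k}\Bigl(\frac{F}{\Psi_1'}\Bigr)(0)\,\bigl((-1)^{k}-1\bigr)\,a(h)^{k},
\end{equation*}
which is zero for even $k$ but equals $-\frac{4\chi_1(0)}{k}\bigl(\frac{F}{\Psi_1'}\bigr)(0)\,a(h)^{k}$ for odd $k$ --- and odd $k$ ($k=3,5$) is precisely what the applications need. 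Analyticity of $F/\Psi_1'$ at $0$ only makes this quantity finite, and the $y\mapsto-y$ symmetry of the oval makes the two branch contributions \emph{add} rather than cancel when $k$ is odd, so the two ingredients you invoke do not ``conspire'' to kill it. A concrete failure: $\chi_1\equiv1$, $\Psi_1(x)=x$, $F\equiv1$, $k=3$ gives $G_1\equiv0$, yet $\int_{\Gamma_h^{+}}y\,dx=\tfrac{4}{3}h^{3/2}\neq0$.

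What is actually needed, and what your proof must verify, is the extra condition $(F/\Psi_i')(0)=0$ (or $k$ even). That condition does hold in every use of the lemma in Section 4, because there $F$ is always a multiple of $\Psi_i$ and $\Psi_i/\Psi_i'$ vanishes at the origin; once it is added, your argument closes. Note that the paper's own write-up slides over the identical point: it applies the on-curve identity $d(g_1y^{k})=(\cdots)y^{k}dx-\tfrac{k}{2}(\cdots)y^{k-2}dx$ to the whole loop including the segment $\overrightarrow{BA}$, where that identity fails and where $d(g_1y^{k})=kg_1(0)y^{k-1}dy$ integrates to $g_1(0)\bigl(a(h)^{k}-b(h)^{k}\bigr)$ --- the same discarded term. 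So your instinct that the boundary contribution is ``the point that deserves the most care'' is exactly right; the defect is that you asserted its vanishing instead of checking it, and as stated the assertion is false.
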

\begin{proof}
If $(x,y)\in \Gamma_{h}^{+}$, then we have $\displaystyle\frac{dy}{dx}=-\dfrac{ \Psi_{1}^{\prime}(x)+\chi_{1}(x)y^2}{2\chi_{1}(x)y}$,
so
\begin{equation*}
\begin{aligned}
d(g_{1}(x)y^{k})=&\;g^{\prime}_{1}(x)y^{k}dx+k g_{1}(x)y^{k-1}dy\\
=&\left( g^{\prime}_{1}(x)-\frac{k}{2}\left( \dfrac{\Psi^{\prime}_{1}g_{1}}{\chi_1}\right) (x)\right) y^{k}dx
-\frac{k}{2} \left( \dfrac{\Psi^{\prime}_{1}g_{1}}{\chi_1}\right) (x)y^{k-2} dx,
\end{aligned}
\end{equation*}
and noting that 
\begin{equation*}
\int_{\overrightarrow{BA} }\left( g^{\prime}_{1}(x)-\frac{k}{2}\left( \dfrac{\Psi^{\prime}_{1}g_{1}}{\chi_1}\right) (x)\right) y^{k}dx=\int_{\overrightarrow{BA}}
\left( \dfrac{\Psi^{\prime}_{1}g_{1}}{\chi_1}\right) (x)y^{k-2} dx=0,
\end{equation*}
then we obtain that
\begin{equation*}
\begin{aligned}
\oint_{\Gamma_{h}^{+}\cup \overrightarrow{BA}}d(g_{1}(x)y^{k})=&\;\oint_{\Gamma_{h}^{+}\cup \overrightarrow{BA}}\left( g^{\prime}_{1}(x)-\frac{k}{2}
\left( \dfrac{\Psi^{\prime}_{1}g_{1}}{\chi_1}\right) (x)\right) y^{k}dx\\
&-\frac{k}{2}\oint_{\Gamma_{h}^{+}\cup \overrightarrow{BA}} \left( \dfrac{\Psi^{\prime}_{1}g_{1}}{\chi_1}\right) (x)y^{k-2} dx=0,
\end{aligned}
\end{equation*}
Now the result follows taking $F=\displaystyle\frac{k}{2}\left(\dfrac{\Psi^{\prime}_{1}g_{1}}{\chi_1}\right)(x) $ in the above equality. The proof of 
the second relation follows a similar line of reasoning to that of the first relation.
\end{proof}
Now we provide some new results to demonstrate the practical application of our work.
\begin{example} 
Consider the perturbed Hamiltonian differential system
\begin{equation}\label{syex1}
\left( \begin{array}{ll}
\dot x\\
\dot y \end{array}\right)= \begin{cases}
\left( \begin{array}{ll}
2 y\\
x(x-1)+\varepsilon (a^{+}_{0}+a^{+}_{1}x)y \end{array}\right), & x> 0,\\
\\
\left( \begin{array}{ll}
2 y\\
-2 x+\varepsilon (a^{-}_{0}+a^{-}_{1}x)y \end{array}\right), & x< 0,
\end{cases}
\end{equation}
with the Hamiltonian function
\begin{equation*}
H(x,y)=\begin{cases}
H^{+}(x,y)={y}^{2}+\Psi_{1}(x), & x> 0,\\
H^{-}(x,y)={y}^{2}+\Psi_{2}(x), & x< 0,
\end{cases}\;\ \text{with}\;\
\begin{cases}
\Psi_{1}(x)=\frac{1}{2}\,{x}^{2}-\frac{1}{3}\,{x}^{3},\\
\Psi_{2}(x)={x}^{2},
\end{cases}
\end{equation*}
where the orbital arcs $ \Gamma^{\pm}_{h} $ are defined for $h\in(0, 1/6)$; see Figure \ref{fig3}. The x-axis projection of the non-smooth period annulus 
satisfies $-{\frac{1}{\sqrt{6}}} < \sigma_1(x) < 0 < x < 1$. Furthermore, we observe that
\begin{figure}[]
\centering
\includegraphics[scale=.5]{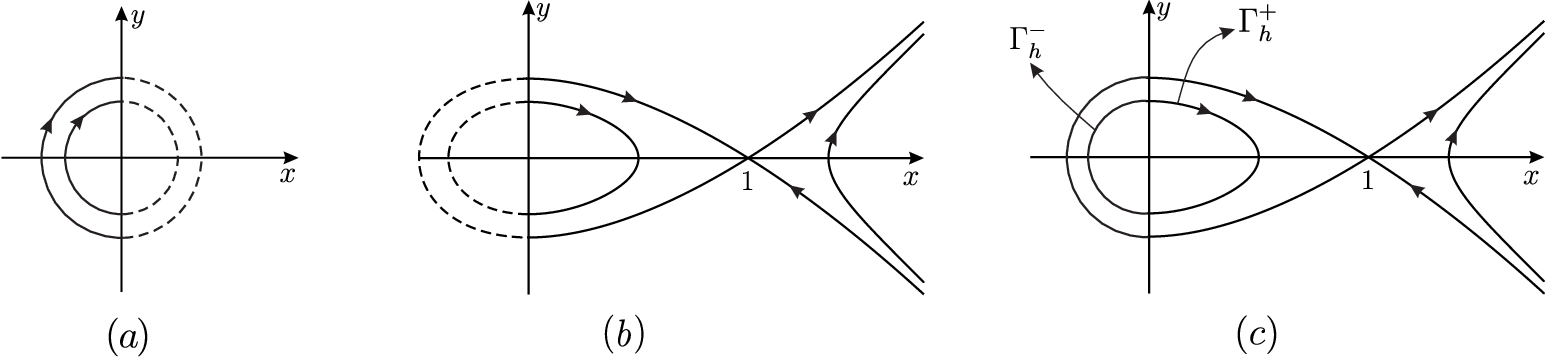}
\caption{$(a)$, $(b)$ and $(c)$ represent the level curves of $H^{-}$, $H^{+}$ and $H$, respectively.\label{fig3}}
\end{figure}
\begin{equation*}
\begin{array}{ll}
x\Psi_{1}^{\prime}(x)=x^{2}(1-x)>0,\hspace*{1cm}&\forall x\in(-\infty ,1)\setminus \lbrace 0\rbrace,\\
x\Psi_{2}^{\prime}(x)=2x^{2}>0,&\forall x\neq 0.
\end{array}
\end{equation*}
We can express the first order Melnikov function of system \eqref{syex1} as
\begin{equation*}
a^{+}_{0}I_{0}(h)+a^{-}_{0}J_{0}(h)+a^{+}_{1}I_{1}(h)+a^{-}_{1}J_{1}(h),
\end{equation*}
where
\begin{equation*}
I_{i}(h)=\int_{\Gamma_{h}^{+}}x^{i}ydx,\quad J_{i}(h)=\int_{\Gamma_{h}^{-}}x^{i}ydx,\qquad i=0,1.
\end{equation*}
First we show that the set $\lbrace I_0(h), J_0(h), I_1(h), J_1(h)\rbrace$ is linearly independent in the interval $(0, 1/6)$. We do 
this by showing that the only solution to the equation
\begin{equation}\label{lcex1}
c_{0}I_0(h)+c_{1}J_0(h)+c_{2}I_1(h)+c_{3}J_1(h)=0,
\end{equation}
for all $h\in(0, 1/6)$ is the trivial solution $c_{0}=c_{1}=c_{2}=c_{3}=0$.
Assuming that the non-smooth oval $\Gamma_{h}^{+}\cup \Gamma_{h}^{-}$ has a clockwise orientation, we can use the fact that 
$y_{h}^{\pm}(x)=-\tilde{y}_{h}^{\pm}(x)$ to obtain
\begin{equation}\label{II}
I_{i}(h)=2 \int_{0}^{x_{h}^{+}}x^{i}\tilde{y}_{h}^{+}(x) dx,\quad i=1,0,
\end{equation}
and 
\begin{equation*}
J_{i}(h)=2\int_{x_{h}^{+}}^{0}(\sigma_{1}(u))^{i}\sigma_{1}^{\prime}(u)y
\Big{\vert}_{\tilde{y}_{h}^{-}(\sigma_{1}(u))} du,
\end{equation*}
where in the above equality we applied the change of variable $x = \sigma_{1}(u)$. Now using the fact $\tilde{y}_{h}^{-}(\sigma_{1}
(u))=\tilde{y}_{h}^{+}(u)$, we have that
\begin{equation}\label{JJ}
J_{i}(h)=-2\int_{0}^{x_{h}^+}(\sigma_{1}(x))^{i}\sigma^{\prime}_{1}(x)\tilde{y}_{h}^{+}(x)dx,\quad i=1,0.
\end{equation}
Therefore, using \eqref{II}, \eqref{JJ} and equation \eqref{lcex1}, we can obtain that
\begin{equation*}
\int_{0}^{x_{h}^+}\left( c_{0} - c_{1}\sigma^{\prime}_{1}(x)+c_{2}x-c_{3}\sigma^{\prime}_{1}(x)\sigma_{1}(x)\right) 
\tilde{y}_{h}^{+}(x) dx=0,
\end{equation*}
and using the fact $ \sigma^{\prime}_{1}(x)=\dfrac{\Psi^{'}_{1}(x)}{\Psi^{'}_{2}(z)} $, we get
\begin{equation*}
p(x,y)=\left( c_{0}+c_{2}x\right)\Psi^{'}_{2}(z) - \left( c_{1}+c_{3}z\right) \Psi^{'}_{1}(x)=0,
\end{equation*}
where $z=\sigma_1(x)$ for $x\in(0,1)$ satisfies
\begin{equation*}
\Psi_{1}(x)-\Psi_{2}(z)=\frac{1}{6}q(x,z)=0,
\end{equation*}
with $q(x,z)=-2\,{x}^{3}+3\,{x}^{2}-6\,{z}^{2}$. The resultant of $ p(x,y)$ and $q(x,y)$ is given by
\begin{equation*}
\begin{aligned}
&\left( 12\,{c_{{0}}}^{2}-6\,{c_{{1}}}^{2} \right) {x}^{2}+ \left( -8
\,{c_{{0}}}^{2}+24\,c_{{0}}c_{{2}}-12\,c_{{0}}c_{{3}}+12\,{c_{{1}}}^{2
} \right) {x}^{3}\\
&+ \left( -16\,c_{{0}}c_{{2}}+20\,c_{{0}}c_{{3}}-6\,{c
_{{1}}}^{2}+12\,{c_{{2}}}^{2}-12\,c_{{2}}c_{{3}}+3\,{c_{{3}}}^{2}
\right) {x}^{4}\\
&+ \left( -8\,c_{{0}}c_{{3}}-8\,{c_{{2}}}^{2}+20\,c_{{2
}}c_{{3}}-8\,{c_{{3}}}^{2} \right) {x}^{5}+ \left( -8\,c_{{2}}c_{{3}}+
7\,{c_{{3}}}^{2} \right) {x}^{6}\\
&-2\,{c_{{3}}}^{2}{x}^{7}
=0.
\end{aligned}
\end{equation*}
Thus, for all $x$ in the interval $(0,1)$, we have $c_0=c_1=c_2=c_3=0$. Consequently, the family of functions $(I_0, J_0, I_1, J_1)$ 
is linearly independent in the interval $(0,1/6)$.

Since $s=1$ and $n=2$, it is clear that the hypothesis $s>2(n-1)$ is not satisfied. However, we can resolve this issue by using 
Lemma \ref{lems} to obtain new line integrals that satisfy the inequality with a sufficiently large corresponding value of $s$. 
Here we need to promote the power $s$ to three such that the hypothesis $s>2(n-1)$ holds.
On the arc $\Gamma_{h}^{+}$, we get that
\begin{equation*}
\begin{aligned}
I_{i}(h)=&\int_{\Gamma_{h}^{+}}x^{i}ydx=\frac{1}{h}\int_{\Gamma_{h}^{+}}\left( y^{2}+\Psi_{1}(x)\right)x^{i} ydx\\
=&\frac{1}{h}\left( \int_{\Gamma_{h}^{+}}x^{i}\Psi_{1}(x) ydx+\int_{\Gamma_{h}^{+}}x^{i}y^{3}dx\right),
\end{aligned}
\end{equation*}
and also, on the arc $\Gamma_{h}^{-}$, we obtain that
\begin{equation*}
\begin{aligned}
J_{i}(h)=&\int_{\Gamma_{h}^{-}}x^{i}ydx=\frac{1}{h}\int_{\Gamma_{h}^{-}}\left( y^{2}
+\Psi_{2}(x)\right)x^{i}ydx \\
=&\frac{1}{h}\left( \int_{\Gamma_{h}^{-}}x^{i}\Psi_{2}(x)ydx+\int_{\Gamma_{h}^{-}}x^{i}y^{3}dx\right).
\end{aligned}
\end{equation*}
We use Lemma \ref{lems} with $k=3$ and $F(x)=x^{i}\Psi_{j}(x)$ for $j=1,2$ to the first integral of $I_{i}(h)$ and $J_{i}(h)$,
respectively, to promote the value of $s$ as
\begin{equation*}
\int_{\Gamma^{+}_h}x^{i}\Psi_{1}(x)ydx=\int_{\Gamma^{+}_h}G_{1i}(x)y^3dx,\qquad \int_{\Gamma^{-}_h}x^{i}\Psi_{2}(x)ydx
=\int_{\Gamma^{-}_h}G_{2i}(x)y^3dx,
\end{equation*}
where
\begin{equation*}
\begin{aligned}
G_{1i}(x)=&\;\frac{2}{3}\left( \frac{x^{i}\Psi_{1}(x)}{\Psi^{\prime}_{1}(x)}\right)' =\frac{1}{9}\,{\frac {{x}^{i} 
\left( 2\,i{x}^{2}-5\,ix+2\,{x}^{2}+3\,i-4\,x+3 \right) }{ \left( x-1 \right) ^{2}}},\\
G_{2i}(x)=&\;\frac{2}{3}\left( \frac{x^{i}\Psi_{2}(x)}{\Psi^{\prime}_{2}(x)}\right)'=
\frac{1}{3}\,{x}^{i} \left( i+1 \right).
\end{aligned}
\end{equation*}
Now, the expressions for $I_{i}(h)$ and $J_{i}(h)$ take the following form
\begin{equation*}
\begin{aligned}
I_{i}(h)=&\;\frac{1}{h}\int_{\Gamma_{h}^{+}}\left( x^{i}+G_{1i}(x)\right) y^{3}dx
=\frac{1}{h^2}\int_{\Gamma_{h}^{+}}\left( y^{2}+\Psi_{1}(x)\right)\left( x^{i}+G_{1i}(x)\right) y^{3}dx\\
=&\;\frac{1}{h^2}\left( \int_{\Gamma_{h}^{+}}\left( x^{i}+G_{1i}(x)\right)\Psi_{1}(x) y^{3}dx+
\int_{\Gamma_{h}^{+}}\left( x^{i}+G_{1i}(x)\right) y^{5}dx\right), 
\end{aligned}
\end{equation*}
and 
\begin{equation*}
\begin{aligned}
J_{i}(h)=&\;\frac{1}{h}\int_{\Gamma_{h}^{-}}\left( x^{i}+G_{2i}(x)\right) y^{3}dx
=\frac{1}{h^2}\int_{\Gamma_{h}^{-}}\left( y^{2}+\Psi_{2}(x)\right)\left( x^{i}+G_{2i}(x)\right) y^{3}dx\\
=&\;\frac{1}{h^2}\left( \int_{\Gamma_{h}^{-}}\left( x^{i}+G_{2i}(x)\right)\Psi_{2}(x) y^{3}dx+
\int_{\Gamma_{h}^{-}}\left( x^{i}+G_{2i}(x)\right) y^{5}dx\right). 
\end{aligned}
\end{equation*}
We again apply Lemma \ref{lems} with $k=5$ and
$F(x)=(x^{i}+G_{ji}(x))\Psi_{j}(x)$, for $j=1,2$, to the first integral of $I_{i}(h)$ and $J_{i}(h)$,
respectively, to get that
\begin{equation*}
\begin{aligned}
\int_{\Gamma^{+}_h}(x^{i}+G_{1i}(x))\Psi_{1}(x)y^{3}dx=&\; \int_{\Gamma^{+}_h}H_{1i}(x)y^5dx,\\
\int_{\Gamma^{-}_h}(x^{i}+G_{2i}(x))\Psi_{2}(x)y^{3}dx=&\; \int_{\Gamma^{-}_h}H_{2i}(x)y^5dx,
\end{aligned}
\end{equation*}
where
\begin{equation*}
\begin{aligned}
H_{1i}(x)=&\;\frac{2}{5}\left( \frac{(x^{i}+G_{1i}(x))\Psi_{1}(x)}{\Psi^{\prime}_{1}(x)}\right)'
={\frac {h_{1i}(x) }{135 \left( x-1 \right) ^{4}}},\\
H_{2i}(x)=&\;\frac{2}{5}\left( \frac{(x^{i}+G_{2i}(x))\Psi_{2}(x)}{\Psi^{\prime}_{2}(x)}\right)'
={\frac {2 }{135}}\left({x}^{i} \left( 2\,{i}^{2}+13\,i+11 \right) \right),
\end{aligned}
\end{equation*}
with
\begin{equation*}
\begin{aligned}
h_{1i}(x)=&\;{x}^{i} ( 4\,{i}^{2}{x}^{4}-20\,{i}^{2}{x}^{3}+26\,i{x}^{4}+37\,{i}^{2}{x}^{2}-115\,i{x}^{3}+22\,{x}^{4}-30\,{i}^{2}x\\
&+194\,i{x}^{2}-88\,{x}^{3}+9\,{i}^{2}-150\,ix+141\,{x}^{2}+45\,i-108\,x+36).
\end{aligned}
\end{equation*}
Then the expressions for $I_i(h)$ and $J_i(h)$ can be written as follows
\begin{equation*} 
\begin{aligned}
I_i(h) &= \frac{1}{h^2} \int_{\Gamma_h^+} \left(x^i + G_{1i}(x) + H_{1i}(x)\right) y^5 dx, \\ 
J_i(h) &= \frac{1}{h^2} \int_{\Gamma_h^-} \left(x^i + G_{2i}(x) + H_{2i}(x)\right) y^5 dx.
\end{aligned}
\end{equation*}
Alternatively, we can define $L_{i}(h)$ as
\begin{equation*} 
L_i (h)=
\begin{cases}
\tilde{I}_i(h), & i = 0, 2, \\ 
\tilde{J}_i(h), & i = 1, 3,
\end{cases} 
\end{equation*}
where
\begin{equation*}
\tilde{I}_{i}(h) = \int_{\Gamma_h^+} f_{1i}(x) y^5 dx, \qquad
\tilde{J}_{i}(h) = \int_{\Gamma_h^-} f_{2i}(x) y^5 dx, 
\end{equation*}
and $f_{1i}(x)$ and $f_{2i}(x)$ are given by
\begin{align*} 
f_{1i}(x) &= x^i + G_{1i}(x) + H_{1i}(x), \\ 
f_{2i}(x) &= x^i + G_{2i}(x) + H_{2i}(x). 
\end{align*}
It is clear that $\lbrace I_0, J_0, I_1, J_1\rbrace$ is an ECT-system on $(0,1/6)$ if and only if
$\lbrace L_0,L_1,L_2,L_3\rbrace$ is as well. Now we can apply Theorem \ref{the1}, because 
$s=3$ and the condition $s>2(n-1)$ holds. Thus, by setting
\begin{equation*} 
l_i(x)= 
\begin{cases}
f_{1\left( \frac{i}{2}\right)}(x), & i=0,2,\\ 
\\
-f_{2\left( \frac{i-1}{2}\right) }(\sigma(x))\sigma^{\prime}(x), & i=1,3,
\end{cases} 
\end{equation*} 
we need to check that $\lbrace l_{0}, l_{1}, l_{2}, l_{3}\rbrace$ is a CT-system on $(0,1)$. In fact, we will show that 
$\lbrace l_{0}, l_{1}, l_{2}, l_{3}\rbrace$ is an ECT-system because a continuous Wronskian is easier to study. 
The Wronskian associated to $l_0$
is given by 
\begin{equation*}
W[l_0](x)=\frac{1}{135}{\frac {187\,{x}^{4}-748\,{x}^{3}+1146\,{x}^{2}-798\,x+216}{ \left( x-
1 \right) ^{4}}},
\end{equation*}
which is well-defined, and using Sturm's Theorem, it has no zeros in the interval $(0,1)$. It conclude that $W[l_0]\neq 0$ 
for all $x\in(0,1)$. The Wronskian associated to $l_0$ and $l_{1}$ is the rational function 
\begin{equation*}
W[\bold{l}_{\bold{2}}](x)=\frac{2}{675}{\frac {M_{{1}} \left( x,z \right)}{\left( x-1 \right) ^{4}{z}^{3}
}},
\end{equation*}
where
\begin{equation*}
\begin{aligned}
M_{{1}} \left( x,z \right) =&\;187\,{x}^{8}-1122\,{x}^{7}+748\,{x}^{5}{z}^{2}+2829\,{x}^{6}-3366\,{x}
^{4}{z}^{2}-3838\,{x}^{5}+6176\,{x}^{3}{z}^{2}\\
&+2958\,{x}^{4}-5688\,{x}^{2}{z}^{2}-1230\,{x}^{3}+2592\,x{z}^{2}+216\,{x}^{2}-432\,{z}^{2},
\end{aligned}
\end{equation*}
and it is clear that $ W[\bold{l}_{\bold{2}}](x)$ is well-defined in $-\sqrt [3]{\frac{1}{6}}<z<0<x <1$. 
The resultant with respect to $z$ between $q(x,z)$ and $ M_{{1}} \left( x,z \right) $ is $r_{1}(x)=16\,{x}^{6} p_{1}(x)$,
where
\begin{equation*}
p_1(x)=\left( 187\,{x}^{5}-1122\,{x}^{4}+2738\,{x}^{3}-3438\,{x}^{2}+2250\,x
-630 \right) ^{2}.
\end{equation*}
By applying Sturm's Theorem we get that $p_1(x)\neq 0$ for all $x\in (0,1)$. It implies that $W[\bold{l}_{\bold{2}}](x)\neq 0$ 
for all $x\in(0,1)$. Now we can find that
\begin{equation*}
W[\bold{l}_{\bold{3}}](x)=\frac{1}{91125}\frac { M_{2}(x,z)}{{z}^{5} \left( x-1 \right) ^{8}},
\end{equation*}
where $M_2(x,z)$ is a polynomial with long expression in $(x,z)$. The resultant with respect to $z$ between $q(x,z)$ and 
$ M_{2}(x,z) $ is $r_2(x)=16\,{x}^{10}p_2(x)$, where $ p_2(x) $ is a polynomial in $x$ of degree $18$. By applying Sturm's Theorem, 
we get that $r_2(x)\neq 0$ for all $x\in (0,1)$. It follows that $W[\bold{l}_{\bold{3}}](x)\neq 0$ for all $x\in(0,1)$. 
Finally, we get that
\begin{equation*}
W[\bold{l}_{\bold{4}}](x)=-\frac{7}{546750}\frac { M_{3}(x,z)}{{z}^{7} \left( x-1 \right) ^{8}},
\end{equation*}
where $M_3(x,z)$ is a polynomial with long expression in $(x,z)$. The resultant with respect to $z$ between $q(x,z)$ and 
$ M_{3}(x,z) $ is $r_3(x)=784\,{x}^{14}p_3(x)$, where $ p_3(x) $ is a polynomial in $x$ of degree $18$. Using Sturm's Theorem, 
we obtain that $r_3(x)\neq 0$ for all $x\in (0,1)$. It follows that $W[\bold{l}_{\bold{4}}](x)\neq 0$ for all $x\in(0,1)$. 
Thus this shows that $\lbrace l_0,l_1,l_2,l_3\rbrace$ is an ECT-system on $(0, 1)$. According to Theorem \ref{the1},
system \eqref{syex1} can have a maximum of three limit cycles that bifurcate from its period annulus.
\end{example}
\begin{example} 
Consider the perturbed Hamiltonian differential system
\begin{equation}\label{syex2}
\left( \begin{array}{ll}
\dot x\\
\dot y \end{array}\right)= \begin{cases}
\left( \begin{array}{ll}
2 y+\varepsilon\left( \sum_{i+j=0}^{1}a^{+}_{ij}x^{i}y^{j}\right) \\
2x-1+\varepsilon\left( \sum_{i+j=0}^{1}b^{+}_{ij}x^{i}y^{j}\right) \end{array}\right), &x> 0,\\
\\
\left( \begin{array}{ll}
2 y+\varepsilon\left( \sum_{i+j=0}^{1}a^{-}_{ij}x^{i}y^{j}\right)\\
1+\varepsilon\left( \sum_{i+j=0}^{1}b^{-}_{ij}x^{i}y^{j}\right)\end{array}\right), &x< 0,
\end{cases}
\end{equation}
with the Hamiltonian function
\begin{equation*}
H(x,y)=\begin{cases}
H^{+}(x,y)=\frac{1}{2}\,{y}^{2}+\Psi_{1}(x), & x> 0,\\
H^{-}(x,y)=\frac{1}{2}\,{y}^{2}+\Psi_{2}(x), & x< 0,
\end{cases}\;\ \text{with}\;\
\begin{cases}
\Psi_{1}(x)=-{x}^{2}+x,\\
\Psi_{2}(x)=-x,
\end{cases}
\end{equation*}
where the arcs $ \Gamma^{\pm}_{h} $ are defined for $h\in(0, 1/4)$. The non-smooth period annulus projects onto the $x$-axis satisfies $-\frac{1}{4}<\sigma_{1}(x)<0<x <\frac{1}{2}$. We also see that there exists a periodic annulus around the origin because 
\begin{equation*}
\begin{array}{ll}
x\Psi_{1}^{\prime}(x)=x(-2x+1)>0,\hspace*{1cm}&\forall x\in(0 ,\frac{1}{2}),\\
x\Psi_{2}^{\prime}(x)=-x>0,&\forall x< 0.
\end{array}
\end{equation*}
The first order Melnikov function of system \eqref{syex2} is given by
\begin{equation*}
\tilde{a}_{01}I_{0}(h)+\tilde{b}_{01}J_{0}(h),
\end{equation*}
where $ \tilde{a}_{01}=a^{+}_{10}+ b^{+}_{10}$, $ \tilde{b}_{01}=a^{-}_{10}+ b^{-}_{10}$ and 
\begin{equation*}
I_{0}(h)=\int_{\Gamma_{h}^{+}}ydx,\quad J_{0}(h)=\int_{\Gamma_{h}^{-}}ydx.
\end{equation*}
We first show that the set $\lbrace I_0(h), J_0(h)\rbrace$ is linearly independent in the interval $(0, 1/4)$. In fact,
the only solution to the equation
\begin{equation}\label{lcex2}
c_{0}I_0(h)+c_{1}J_0(h)=0,
\end{equation}
for all $h\in(0, 1/4)$ is the trivial solution $c_{0}=c_{1}=c_{2}=c_{3}=0$. Suppose that the non-smooth oval $\Gamma_{h}^{+}\cup \Gamma_{h}^{-}$ has a 
clockwise orientation. By using the fact that 
$y_{h}^{\pm}(x)=-\tilde{y}_{h}^{\pm}(x)$, we obtain
\begin{equation}\label{II2}
I_{0}(h)=2 \int_{0}^{x_{h}^{+}}\tilde{y}_{h}^{+}(x) dx,
\end{equation}
and also, using the change of variable $x = \sigma_{1}(u)$, we have
\begin{equation*}
J_{i}(h)=2\int_{x_{h}^{+}}^{0}\sigma_{1}^{\prime}(u)y
\Big{\vert}_{\tilde{y}_{h}^{-}(\sigma_{1}(u))} du.
\end{equation*}
Now, using $\tilde{y}_{h}^{-}(\sigma_{1}(u))=\tilde{y}_{h}^{+}(u)$, we get 
\begin{equation}\label{JJ2}
J_{i}(h)=-2\int_{0}^{x_{h}^+}\sigma^{\prime}_{1}(x)\tilde{y}_{h}^{+}(x)dx,\quad i=1,0.
\end{equation}
Hence, using \eqref{II2}, \eqref{JJ2} and equation \eqref{lcex2}, we can find that
\begin{equation*}
\int_{0}^{x_{h}^+}\left( c_{0} - c_{1}\sigma^{\prime}_{1}(x)\right) 
\tilde{y}_{h}^{+}(x) dx=0,
\end{equation*}
or equivalently,
\begin{equation*}
c_{0} - c_{1}\sigma^{\prime}_{1}(x)=0.
\end{equation*}
From $ \sigma^{\prime}_{1}(x)=2x-1 $, it follows that $c_{0}=c_{1}=0$. Consequently, the family of functions 
$(I_0(h), J_0(h))$ is linearly independent in the interval $(0,1/4)$.

Since $s=1$ and $n=1$, it shows that the hypothesis $s>2(n-1)$ holds. Now we define $L_{i}(h)$ as
\begin{equation*} 
L_i (h)=
\begin{cases}
I_i(h), & i = 0, \\ 
J_i(h), & i = 1,
\end{cases} 
\end{equation*}
and by setting
\begin{equation*} 
l_i(x)= 
\begin{cases}
1, & i=0,\\ 
-2x+1, & i=1,
\end{cases} 
\end{equation*} 
we need to check that $\lbrace l_{0}, l_{1}\rbrace$ is a ECT-system on $(0,1)$.
The Wronskian associated to $l_0$ is clearly nonzero, and we compute the Wronskian associated to $l_0$ and $l_1$ as
\begin{equation*}
W[\bold{l}_{\bold{2}}](x)=
\begin{vmatrix} 
1 & -2x+1 \\ 0 & -2
\end{vmatrix},
\end{equation*}
which is also nonzero.
Hence, by applying Theorem \ref{the1}, system \eqref{syex2} has at most one limit cycle that bifurcate from its period annulus.
\end{example}
\begin{example}
Consider the perturbed Hamiltonian differential system
\begin{equation}\label{syex3}
\left( \begin{array}{ll}
\dot x\\
\dot y \end{array}\right)= \begin{cases}
\left( \begin{array}{ll}
2 y\\
x(x-1)+\varepsilon (a^{+}_{0}+a^{+}_{1}x)y \end{array}\right), & x> 0,\\
\\
\left( \begin{array}{ll}
2 y\\
-x(x+1)+\varepsilon (a^{-}_{0}+a^{-}_{1}x)y \end{array}\right), & x< 0,
\end{cases}
\end{equation}
with the Hamiltonian function
\begin{equation*}
H(x,y)=\begin{cases}
H^{+}(x,y)={y}^{2}+\Psi_{1}(x), & x> 0,\\
H^{-}(x,y)={y}^{2}+\Psi_{2}(x), & x< 0,
\end{cases}\;\ \text{with}\;\
\begin{cases}
\Psi_{1}(x)=\frac{1}{2}\,{x}^{2}-\frac{1}{3}\,{x}^{3},\\
\Psi_{2}(x)=\frac{1}{2}\,{x}^{2}+\frac{1}{3}\,{x}^{3},
\end{cases}
\end{equation*}
where the arcs $ \Gamma^{\pm}_{h} $ are defined for $h\in(0, 1/6)$. The x-axis projection of the non-smooth period 
annulus satisfies $-1 < \sigma_1(x) < 0 < x < 1$, where $ \sigma_1(x)=-x $. Furthermore, we can see that
\begin{equation*}
\begin{array}{ll}
x\Psi_{1}^{\prime}(x)=x^{2}(1-x)>0,\hspace*{1cm}&\forall x\in(-\infty ,1)\setminus \lbrace 0\rbrace,\\
x\Psi_{2}^{\prime}(x)=x^{2}(x+1)>0,&\forall x\in(-1 ,\infty)\setminus \lbrace 0\rbrace.
\end{array}
\end{equation*}
The first order Melnikov function of system \eqref{syex3} can be expressed as a linear combination of four integrals, given by
\begin{equation}\label{lcex3}
a^{+}_{0}I_{0}(h)+a^{-}_{0}J_{0}(h)+a^{+}_{1}I_{1}(h)+a^{-}_{1}J_{1}(h),
\end{equation}
where
\begin{equation*}
I_{i}(h)=\int_{\Gamma_{h}^{+}}x^{i} y dx,\quad J_{i}(h)=\int_{\Gamma_{h}^{-}}x^{i} y dx,\qquad i=0,1.
\end{equation*}

First we show that the set of functions $\lbrace I_i(h),J_i(h)\rbrace$ for $i=0,1$ is linearly dependent on the open 
interval $(0,1/6)$. We must show that there exist constants $c_{i}$ and $c_{i+1}$, not all equal to zero, such that the 
linear combination
\begin{equation*}
c_i I_k(h) + c_{i+1} J_k(h)= 0,
\end{equation*} 
holds for all $h$ in $(0,1/6)$. Considering the non-smooth oval $\Gamma_{h}^{+}\cup \Gamma_{h}^{-}$ oriented in a clockwise direction, the previous equation can 
be transformed into the following form
\begin{equation*}
\int_{0}^{x_{h}^+}\left( c_{i}x^{i}-c_{i+1}\sigma^{\prime}_{1}(x)(\sigma_{1}(x))^{i}\right) \tilde{y}_{h}^{+}(x) dx=0,
\end{equation*}
so it follows that
\begin{equation*}
c_{i}x^{i}-c_{i+1}\sigma^{\prime}_{1}(x)(\sigma_{1}(x))^{i}=0.
\end{equation*}
Given that $\sigma_{1}(x) = -x$ and $\sigma_1^\prime(x) = -1$, we can deduce that $c_{i} = (-1)^{i+1}c_{i+1}$.
This implies that the two functions are linearly dependent on $(0,1/6)$. In other words, one function can be expressed as a 
scalar multiple of the other. Hence the linear combination \eqref{lcex3} is reduced to the following form
\begin{equation*}
a_{0}I_{0}(h)+a_{1}I_{1}(h).
\end{equation*}
Since we have $s=1$, it is clear that the hypothesis $s>n-2$ holds in this case. By setting
\begin{align*}
l_i(x)=x^{i},\qquad i=0,1,
\end{align*}
we will show that $\lbrace l_0,l_1\rbrace$ is a ECT-system on $x\in(0,1)$, and it implies that $\lbrace I_0, I_1\rbrace$ is an ECT-system 
as well.
The Wronskian of function $l_0(x)$ is obviously nonzero and we need to compute the Wronskian of the functions
$l_0(x)$ and $l_1(x)$ as follows
\begin{equation*}
W[1,x]=
\begin{vmatrix} 
1 & x \\ 0 & 1 
\end{vmatrix},
\end{equation*}
which is also nonzero. Now, using Theorem \ref{the2}, we conclude that system \eqref{syex3} has at most one limit cycle that 
bifurcate from the period annulus.
\end{example}

\section*{Acknowledgments}
The first author is supported by FAPESP grant, process number 2022/07822-5. The second author is partially support by FAPESP grant,
process number 2019/09385-9.

\end{document}